\documentclass[a4paper]{amsart}
\usepackage{amsmath}
\usepackage{graphicx}
\usepackage{amssymb}
\usepackage{mathrsfs}
\usepackage{nicematrix}
\usepackage{stmaryrd}
\usepackage[mathcal]{euscript}
\usepackage{tikz}
\usepackage{tikz-cd}
\usepackage{hyperref}
\hypersetup{%
  bookmarksnumbered=true,%
  colorlinks=true,%
  linkcolor=blue,%
  citecolor=blue,%
  filecolor=blue,%
  menucolor=blue,%
  urlcolor=blue,%
  bookmarksopen=true,%
  bookmarksdepth=2,%
  pageanchor=true}

\title[Multisets of finite intervals]{Multisets of finite intervals and a
  universal category of poset representations}

\author{Henning Krause}
\author{Balduin Stoye}

\address{Fakult\"at f\"ur Mathematik\\
Universit\"at Bielefeld\\ D-33501 Bielefeld\\ Germany}

\theoremstyle{plain}
\newtheorem{thm}{Theorem}
\newtheorem{prop}[thm]{Proposition}
\newtheorem{lem}[thm]{Lemma} 

\newtheorem{cor}[thm]{Corollary}

\theoremstyle{definition}
\newtheorem{defn}[thm]{Definition}
\newtheorem{exm}[thm]{Example}

\theoremstyle{remark}
\newtheorem{rem}[thm]{Remark}

\numberwithin{equation}{section}

\newcommand{\card}{\operatorname{card}}

\newcommand{\Coker}{\operatorname{Coker}}

\newcommand{\Ext}{\operatorname{Ext}}

\newcommand{\Hom}{\operatorname{Hom}}

\newcommand{\Ker}{\operatorname{Ker}}

\renewcommand{\min}{\operatorname{min}}
\renewcommand{\mod}{\operatorname{mod}}
\newcommand{\Mod}{\operatorname{Mod}}

\newcommand{\NC}{\operatorname{NC}}
\newcommand{\Ob}{\operatorname{Ob}}

\newcommand{\rad}{\operatorname{rad}}

\newcommand{\rep}{\operatorname{rep}}
\newcommand{\Rep}{\operatorname{Rep}}

\newcommand{\Ab}{\mathrm{Ab}}

\newcommand{\op}{\mathrm{op}}

\newcommand{\Set}{\mathrm{Set}}

\newcommand{\iso}{\xrightarrow{\raisebox{-.4ex}[0ex][0ex]{$\scriptstyle{\sim}$}}}

\newcommand{\longiso}{\xrightarrow{\ \raisebox{-.4ex}[0ex][0ex]{$\scriptstyle{\sim}$}\ }}

\newcommand{\lto}{\longrightarrow}

\newcommand{\xto}{\xrightarrow}
\newcommand*{\intref}[2]{\def\tmp{#1}\ifx\tmp\empty\hyperref[#2]{\ref*{#2}}\else\hyperref[#2]{#1~\ref*{#2}}\fi}

\def\A{\mathcal A} 
 
\def\C{\mathcal C}

\def\I{\mathcal I}

\def\bbF{\mathbb F} 
\def\bbK{\mathbb K}

\def\bbZ{\mathbb Z}

\begin{document}

\begin{abstract}
  For any finite totally ordered set, the multisets of intervals form
  an abelian category. Various classes of subcategories admit natural
  combinatorial descriptions, and counting them yields familiar
  integer sequences, including Catalan and large Schröder
  numbers. Surprisingly, in some cases new integer sequences
  arise. The formulation of this counting problem leads to a universal
  construction which assigns to any poset a finitely cocomplete
  additive category; it is abelian when the poset is finite and does
  not depend on the choice of any ring of coefficients. For a general
  poset the universal category of representations is abelian if and
  only if for the lattice of ideals the meet of two compact elements
  is again compact.
\end{abstract}

\keywords{Partially ordered set, representation, integer sequence}

\subjclass[2020]{16G20 (primary); 06A11 (secondary)}

\date{\today}

\maketitle

\setcounter{tocdepth}{1}
\tableofcontents

\section{Introduction}

The study of quiver representations involves the analysis of
subcategories that are defined by natural closure operations; see for
instance \cite{BK2004,IT2009,ONFR2015, ORT2015,Th2012}. It is an
obvious task to count them when there are only finitely many.  In this
note we provide a systematic account for the quiver of type $A_n$ with
linear orientation ($n\ge 1$). This may be formulated as a purely
combinatorial problem. To this end we consider for any finite totally
ordered set the category of multisets of intervals
(Definition~\ref{de:intervals}). It has been known for a long time
that Catalan combinatorics plays an important role \cite{Ga1981}.
Beyond that we discuss some new phenomena. In particular, we clarify
the role of large Schröder numbers; their relevance was noticed before
by Enomoto \cite{En2021,En2022}, but also in \cite{CFR2013,Ro2021}.

The category of multisets of intervals is an abelian category which
can be obtained from a universal construction.  In fact, we construct
for any poset $P$ a universal functor $P\to \rep P$ into a finitely
cocomplete category with a zero object
(Proposition~\ref{pr:cocomplete}).  When $P$ is finite the category
$\rep P$ is actually a \emph{length category}, so an abelian category
such that each object has a finite composition series.  We note that
the construction of $\rep P$ does not depend on the choice of any ring
of coefficients. Thus $\rep P$ may be thought of as a \emph{universal
  category of poset representations}.  It seems to be an interesting
project to study properties of $P$ in terms of $\rep P$.  A striking
example is a recent result of Iyama and Marczinzik showing that a
finite lattice $P$ is distributive if and only if $\rep P$ is the
module category of an Auslander regular algebra \cite{IM2022}. We end
this note by characterising the posets such that $\rep P$ is an
abelian category (Proposition~\ref{pr:coherent}). Along the way we
characterise the posets such that all objects in $\rep P$ are either
noetherian or of finite length (Corollary~\ref{co:subobj}).

\section{Finite intervals}

Let $\bbZ$ denote the set of integers and let $\bbF_2$ denote the field
with two elements. We consider categories that are
\emph{$\bbF_2$-linear}, so the morphism sets are $\bbF_2$-linear
spaces and the composition maps are bilinear.  An \emph{interval of
  length $l$} is a pair of integers $a\le b$ such that $b-a=l$.

\begin{defn}
  For $n\ge 1$ let $\I_n$ denote the category of \emph{intervals
    of type $n$}. The objects are given by
\[\Ob\I_n:=\{[a,b]\in\bbZ\times\bbZ \mid 1\le a\le b\le n\}.\] For objects
$x=[a,b]$ and $y=[c,d]$ set
\[\Hom_{\I_n}(x,y):=\begin{cases} \bbF_2&\text{if }a\le c\le b\le d,\\
    \{0\}& \text{otherwise}.
\end{cases}\]
In case $\Hom_{\I_n}(x,y)\neq 0$  we write $\alpha_{xy}$ for the non-zero morphism
$x\to y$. The composition of morphisms in $\I_n$ is determined by two
rules:
\[\alpha_{yz}\circ\alpha_{xy}=\alpha_{xz}\]  when
\[x=[a,b],\, y=[c,d],\, z=[e,f]\qquad\text{and}\qquad a\le c\le e\le
  b\le d\le f.\] And $\psi\circ\phi=0$ for any other pair of morphisms
$\phi\colon x\to y$ and $\psi\colon y\to z$.
\end{defn}

Next we consider multisets of intervals, and we use the suggestive
notation
\[\oplus_{i=1}^r x_i:=\{x_1,\ldots,x_r\}\]
for a finite multiset of intervals $x_i=[a_i,b_i]$.

\begin{defn}\label{de:intervals}
  For $n\ge 1$ let $\A_n$ denote the category of \emph{mutisets of
  intervals of type $n$}. The objects are given by
\[\Ob\A_n:=\{\oplus_{i=1}^r x_i\mid x_i\in\I_n\text{ for }1\le
  i\le r\}\]
and the morphism sets are $\bbF_2$-linear spaces given by 
\[\Hom_{\A_n}(\oplus_i x_i, \oplus_j y_j):=\bigoplus_{i,j}\Hom_{\I_n}(x_i,y_j).\]
The composition of morphisms in $\A_n$ is given by
\[(\psi_{jk})\circ(\phi_{ij}):=\left(\sum_j\psi_{jk}\circ\phi_{ij}\right)\]
  for $(\phi_{ij})\colon \oplus_i x_i\to \oplus_j y_j$ and
  $(\psi_{jk})\colon \oplus_j y_j\to \oplus_k z_k$.
\end{defn}

\begin{lem}\label{le:repAn}
  The category $\A_n$ is abelian. The direct sum of objects is given
  by the disjoint union, and the full subcategory of indecomposable
  objects identifies with $\I_n$.  Every object admits a finite
  composition series, and the composition length of an interval of
  length $l$ equals $l+1$.
\end{lem}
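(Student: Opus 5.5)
The plan is to identify $\A_n$ with the category $\rep_{\bbF_2}(A_n)$ of finite-dimensional representations over $\bbF_2$ of the linearly oriented quiver $1\to 2\to\cdots\to n$. That category is abelian and Krull--Schmidt, and each object has finite length. Its indecomposables are the interval modules $M_{[a,b]}$, which are $\bbF_2$ at the vertices $a,\dots,b$, zero elsewhere, with identity maps inside the support (Gabriel's theorem, or directly the classification of persistence modules).

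Let $\C$ be the full subcategory of interval modules. First I will compute $\Hom(M_{[a,b]},M_{[c,d]})$. A morphism is a family of scalars $\lambda_i$, for $i\in[a,b]\cap[c,d]$, that commute with the arrows. Commutativity at the left boundary of the intersection forces $\lambda=0$ unless $a\le c$. Commutativity at the right boundary forces $\lambda=0$ unless $b\le d$. The intersection must also be nonempty, i.e.\ $c\le b$. When $a\le c\le b\le d$, all the $\lambda_i$ must be equal, so the space is one-dimensional, spanned by the map $\alpha$ that is the identity on $[c,b]$. The composite of two such maps is the identity on $[e,b]$ whenever this is nonempty. When $a\le c\le e\le b\le d\le f$, this composite is the map $\alpha_{xz}$. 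Otherwise the composite is zero: either the supports of $x$ and $z$ do not overlap, or the Hom-condition for $(x,z)$ fails. This matches the composition rule of $\I_n$, giving an $\bbF_2$-linear equivalence $\I_n\simeq\C$.

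Next I will pass to multisets. In $\A_n$ the morphisms are matrices over $\I_n$. So $\A_n$ is the additive closure $\operatorname{add}\I_n$, with direct sum given by disjoint union of multisets, and the zero object is the empty multiset. Since $\rep(A_n)$ is Krull--Schmidt with indecomposables exactly the interval modules, the functor $\oplus_i x_i\mapsto\bigoplus_i M_{x_i}$ is fully faithful and essentially surjective. Hence $\A_n\simeq\rep_{\bbF_2}(A_n)$ is abelian.

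The equivalence also shows that the indecomposable objects of $\A_n$ are exactly the singletons. Indeed, the endomorphism ring of $x$ is $\bbF_2$, which is local. Moreover $\oplus_i x_i$ with $r\ge2$ decomposes visibly, and uniqueness of decompositions holds by Krull--Schmidt.

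Finally, the composition length can be computed in $\rep(A_n)$. The simples are $M_{[i,i]}$, and the length of a module is its total dimension. So $M_{[a,b]}$ has length $b-a+1=l+1$. An explicit series is given by the submodules $M_{[c,b]}$ for $c=b,b-1,\dots,a$, since the $M_{[c,b]}$ are submodules of $M_{[a,b]}$ with simple quotients $M_{[c,c]}$. Every object is a finite direct sum of intervals, so every object has finite length.

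The main obstacle is proving abelianness intrinsically rather than through the equivalence. That would need explicit kernels and cokernels of matrices in $\A_n$, and these get messy. I would avoid this by relying on the classical classification of indecomposable $A_n$-representations, which is standard. The only real verification is the composition rule in the second paragraph.
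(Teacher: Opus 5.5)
Your route is the paper's: identify $\A_n$ with $\rep(A_n,\bbF_2)$ through interval modules and read everything off from Krull--Schmidt. The only difference is that you spell out the Hom and composition check, which the paper leaves implicit. As written, however, there is an orientation slip that makes that check false.

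For ordinary (covariant) representations of $1\to 2\to\cdots\to n$ the boundary squares behave the other way round. The left-boundary square kills $\lambda$ when $a<c$, not when $a>c$. The right-boundary square kills it when $b<d$. Hence $\Hom(M_{[a,b]},M_{[c,d]})\neq 0$ if and only if $c\le a\le d\le b$. For $n=2$, for example, $\Hom(M_{[1,1]},M_{[1,2]})=0$ because the socle of $M_{[1,2]}$ is $M_{[2,2]}$. But $\Hom_{\I_2}([1,1],[1,2])=\bbF_2$, so your functor is not even well defined on $\I_n$.

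The paper avoids this by using \emph{contravariant} representations of $1\to\cdots\to n$, equivalently arrows $i+1\to i$. In that setting your boundary analysis and your composition check are exactly right. Alternatively, keep your quiver and send $[a,b]$ to $M_{[n+1-b,\,n+1-a]}$.

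Once you fix the orientation, the explicit composition series changes. The subobjects of $M_{[a,b]}$ become the $M_{[a,c]}$, and the $M_{[c,b]}$ are quotients. So use $0\subset M_{[a,a]}\subset M_{[a,a+1]}\subset\cdots\subset M_{[a,b]}$, whose factors are the simples $M_{[c,c]}$. Your length count $b-a+1$ via total dimension is unaffected, and the remaining steps go through as you wrote them.
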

\begin{proof}
We consider the quiver
\[A_n\colon\quad 1\lto 2\lto\cdots\lto n\] and write
$\rep(A_n,\bbF_2)$ for the category of $\bbF_2$-linear
\emph{contravariant representations} of $A_n$ that are finite
dimensional.  This is an $\bbF_2$-linear abelian category. In $\rep(A_n,\bbF_2)$ each object
decomposes essentially uniqueley into indecomposable representations
by the Krull--Remak--Schmidt theorem.
Up to isomorphism, the indecomposable representations are given by
intervals $x=[a,b]\in\I_n$ and are of the form
\[M_x\colon \quad 0\lto\cdots \lto 0\lto \bbF_2\xto{\ 1\ }\cdots\xto{\ 1\
  }\bbF_2\lto 0\lto\cdots\lto 0\] such that
\[M_x(i):=\begin{cases} \bbF_2&\text{if }a\le i\le b,\\
    \{0\}& \text{otherwise}.
  \end{cases}\]
The assignment $\oplus_i x_i\mapsto \bigoplus_i M_{x_i}$ yields an equivalence $\A_n\iso
\rep(A_n,\bbF_2)$, and from this the assertion of the lemma follows.
\end{proof}

\section{Counting subcategories}

For $n\ge 1$ we consider full subcategories $\A\subseteq\A_n$ that are
closed under finite direct sums and summands. Such  a subcategory is
determined by $\A\cap\I_n$. There are  several classes of
subcategories which are defined by natural closure
properties. We consider the following operations and combinations thereof:
\begin{enumerate}
\item[(Q)] taking quotients,
\item[(S)] taking subobjects,
\item[(C)] taking cokernels,
\item[(K)] taking kernels,
\item[(E)] taking extension.
\end{enumerate}
In each case we compute the number of such
subcategories, depending on $n$. This yields an integer
sequence, which we denote by \[\#(X,Y,Z)\] when
the relevant operations are  (X), (Y), and (Z).
We provide the first six terms and the formula for
computing the $n$-th term in case it is known. In some cases
the $n$-th \emph{Catalan number} and
the $n$-th \emph{large Schröder number} arise, which are defined as follows:
\[C_n := \frac{1}{n+1}\binom{2n}{n}\qquad\text{and}\qquad S_n :=
  \sum_{i=0}^n\frac{1}{i+1}\binom{n}{i}\binom{n+i}{i}\]

Not all combinations of the above operations need to be considered,
because some operations imply each other and some others are dual to
each other. For instance, any cokernel is a quotient, or a cokernel is
the dual of a kernel. Observe that there is a duality
\begin{equation}\label{eq:dual}
  (\A_n)^\op\longiso\A_n
\end{equation}  given by the involution of $\{1,2,\ldots,n\}$
taking $i$ to $n-i+1$.

\begin{rem}
  (1) Subcategories are partially ordered by inclusion, and counting
  them often amounts to an isomorphism of posets when one identifies
  them with some other combinatorial structure. The subcategories
  satisfying any choice of the above closure properties are closed under
  intersections; so they form a lattice.

  (2)  Let $\bbF$ be any field. Then one may consider the category
  $\rep(A_n,\bbF)$ of $\bbF$-linear representations of the quiver
  $A_n$.  It is not difficult to show that the lattice of subcategories
 satisfying any choice of the above closure properties does not depend on
  $\bbF$. We have chosen $\bbF=\bbF_2$ in order to emphasise the
  combinatorial nature of the counting problem.

  (3) Some choices are more natural than others. For instance, the
  finite colimit closed subcategories seem natural, given that
  for any ring $A$ the category $\mod A$ of finitely presented $A$-modules
  is the universal finitely cocomplete category over $A$; cf.\
  Proposition~\ref{pr:cocomplete}.
\end{rem}

\subsection*{Serre subcategories (Q,S,E)}
The Serre subcategories $\A\subseteq \A_n$ correspond to sets of simple
objects, by taking the simple objects of $\A_n$ that are contained in $\A$.
There are $n$ simple objects, and therefore the subsets form a
Boolean lattice of cardinality given by: \[2,4,8,16,32,64,\ldots \qquad 2^n\tag*{$\#(Q,S,E)$}\]

\subsection*{Thick subcategories (C,K,E)}
The thick subcategories are precisely the exact abelian and extension
closed subcategories.  Each thick subcategory $\A\subseteq \A_n$ is
generated by an exceptional sequence, and from this fact it follows
that the lattice of thick subcategories identifies with the lattice of
\emph{non-crossing partitions} $\NC(n+1)$; see
\cite{IT2009}. Explicitly, consider an $(n+1)$-gon with vertices
labeled $0,1,\ldots,n$ in clockwise order. The indecomposable objects $[a,b]$
that are simple in $\A$ correspond to chords $(a-1,b)$ describing a
non-crossing partition of order $n+1$.  Thus the number of thick
subcategories is given by the $(n+1)$-th Catalan number:
\[2, 5, 14, 42, 132, 429,\ldots \qquad C_{n+1}\tag*{$\#(C,K,E)$}\]

\subsection*{Quotient closed subcategories (Q)}

The quotient closed subcategories of $\A_n$ correspond to the elements
of the symmetric group $\mathfrak S_{n+1}$ with the sorting order
\cite{Ar2009,ORT2015}. In fact, the lattice equals the product of $n$
chains of length $2,\ldots ,n+1$; see also \cite{CFY2025} where such
subcategories are called pretorsion classes. Explicitly, we
identify
\[\mathfrak S_{n+1}=\{(a_1,\ldots,a_n)\in\bbZ^n\mid  0\le a_i\le
  i\text{ for }1\le i\le n\}\]
with partial order
\[ (a_i)\le (b_i)\quad :\iff\quad a_i\le b_i \text{ for }1\le i\le
  n.\] A quotient closed subcategory $\A\subseteq\A_n$ is determined
by the sequence of integers $(a_1,\ldots,a_n)$ where $a_i$ equals the
maximal $r\ge 0$ such that $P_i/\rad^r P_i$ is in $\A$. Here, $P_i$
denotes the indecomposable projective object of length $i$ and
therefore $0\le a_i\le i$ for all $i$. Conversely, for any such
sequence $(a_i)$ the direct sums of quotients of
$\bigoplus_{i=1}^n P_i/\rad^{a_i} P_i$ form a quotient closed
subcategory of $\A_n$.  Thus their number is given by:
\[2, 6, 24, 120, 720, 5040,\ldots \qquad (n+1)! \tag*{$\#(Q)$}\]

\subsection*{Quotient and subobject closed subcategories (Q,S)}

The quotient and subobject closed subcategories correspond to
sequences of integers $(a_1,\ldots,a_n)$ satisfying for $1\le i\le n$
that $0\le a_i\le i$ and in addition
\[a_{i+1}\le a_i +1.\] The second condition reflects the
property of a quotient closed subcategory to be closed under
subobjects. The number of such sequences is given by the $(n+1)$-th
Catalan number \cite[Exercise~80]{St2015}:
\[2, 5, 14, 42, 132, 429,\ldots \qquad C_{n+1}\tag*{$\#(Q,S)$}\]

Quotient and subobject closed subcategories that contain all simple
objects correspond to the sequences $(a_i)$ such that $a_i\ge 1$ for all
$i$, and their  number is given by the $n$-th
Catalan number.

\subsection*{Torsion classes (Q,E)}

The torsion classes given by a torsion pair are precisely the quotient
and extension closed subcategories. The lattice of such subcategories
identifies with the Tamari lattice of order $n+1$; see
\cite{Th2012}. Explicitly, torsion classes
correspond to sequences of integers $(a_1,\ldots,a_n)$ satisfying for
$1\le i\le j\le n$ that $0\le a_i\le i$
and in addition
\[a_j\ge j-i\quad\implies \quad a_j-a_i\ge j-i.\] The second condition
reflects the property of a quotient closed subcategory to be closed
under extensions. Thus their number is given by the $(n+1)$-th Catalan
number \cite[Problem~A41]{St2015}:
\[2, 5, 14, 42, 132, 429,\ldots \qquad C_{n+1}\tag*{$\#(Q,E)$}\]

Of particular interest are the tilting torsion classes. These are
torsion classes that are generated by a tilting object, and this sets
up a bijection with the isomorphism classes of basic tilting objects, which are counted by
the $n$-th Catalan number \cite{Ga1981}. The lattice is isomorphic to the Tamari
lattice of order $n$; see \cite{BK2004}.

\begin{rem}
  Consider integer sequences $(a_1,\ldots,a_n)$ satisfying
  $0\le a_i\le i$ for all $i$. We may substitute $b_i=i-a_i$ and then
  the torsion classes correspond to sequences $(b_i)$ such that for
  $1\le i\le j\le n$
  \[ b_j\le i\quad\implies \quad b_j\le b_i.\] Using the same
  substitution the quotient and subobject closed subcategories
  correspond to sequences $(c_i)$ such that $c_{i+1}\ge c_i$ for all
  $i$. There is an obvious map which sends a sequence $(b_i)$
  corresponding to a torsion pair to the weakly increasing sequence
  $(c_i)$ such that their multisets of values coincide, so
  $\{b_i\mid 1\le i\le n\}=\{c_i\mid 1\le i\le n\}$. This is actually
  a bijection, but it would be interesting to have an explicit
  construction on the level of subcategories.
\end{rem}  

\subsection*{Extension closed subcategories (E)}

The extension closed subcategories are precisely the fully exact and
idempotent complete subcategories. For the number of these
subcategories no formula is known \cite{OEIS}. The first terms of
the sequence are:
\[2, 7, 34, 199, 1308, 9300,\ldots \tag*{$\#(E)$}\]

In order to indicate the nature of the counting problem, let us
provide a criterion for a subcategory $\A\subseteq\A_n$ to be
extension closed. An elementary calculation shows that it suffices to
check the extensions of indecomposable objects. Fix a pair of
intervals $x=[a,b]$ and $x'=[a',b']$.

\begin{lem}\label{le:ext}
There is a non-zero element $\eta\in\Ext_{\A_n}^1(x',x)$ if and
only if $a\le a'$, $b\le b'$, and $a'\le b+1$. In this case $\eta$ is given by
an exact sequence
\[0\lto x\lto y\oplus y'\lto x'\lto 0\]
where $y=[a,b']$ and  $y'=[a',b]$, with convention $[b+1,b]=0$.\qed
\end{lem}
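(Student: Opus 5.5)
Via the equivalence $\A_n\iso\rep(A_n,\bbF_2)$ from Lemma~\ref{le:repAn}, identify the intervals $x,x'$ with the interval modules $M_x,M_{x'}$. Recall that for this quiver the representations are contravariant, so the arrows point $i+1\to i$ in the representation. The plan is to compute $\Ext^1$ through the standard formula for hereditary algebras, namely the Euler form, combined with $\Hom$. Equivalently, and more concretely, it can be computed through a projective resolution.

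\textbf{Projective resolution.} The projective $P_i$ is the interval $[1,i]$, since $\Hom(P_i,M)=M(i)$, and the quotients of $P_i$ are the intervals $[c,i]$. Hence the interval $x'=[a',b']$ has the projective resolution
\[0\to [1,a'-1]\to[1,b']\to[a',b']\to 0,\]
where the first term is zero if $a'=1$. Applying $\Hom(-,x)$ gives
\[\Ext^1(x',x)=\Coker\bigl(\Hom([1,b'],x)\to\Hom([1,a'-1],x)\bigr).\]

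\textbf{Counting dimensions.} By the definition of $\I_n$, $\Hom([1,b'],[a,b])\ne0$ iff $1\le a\le b'\le b$, and $\Hom([1,a'-1],[a,b])\ne0$ iff $a\le a'-1\le b$. So $\Ext^1\ne 0$ exactly when $a\le a'-1\le b$ holds and the map from $\Hom([1,b'],x)$ is not surjective onto this one-dimensional space. When $a\le b'\le b$, the composite $[1,a'-1]\to[1,b']\to[a,b]$ has $a\le a'-1\le b'\le b$, so by the composition rule it is the nonzero map $\alpha$, and the cokernel vanishes. Therefore $\Ext^1(x',x)\neq0$ iff $a<a'\le b+1$ and not $b'\le b$, that is, $b<b'$. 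I must compare this with the stated condition $a\le a'$, $b\le b'$, $a'\le b+1$. These differ in the boundary cases $a=a'$ or $b=b'$, and I expect that reconciling them is the main obstacle. It is likely that the lemma intends to include degenerate cases, for instance $x=x'$, where the sequence $0\to x\to x\oplus x\to x\to0$ splits. Alternatively the ordering convention should be rechecked, since it matters which of $a,a'$ is "smaller" in the contravariant setting. I would recheck the orientation first. If the boundary cases persist, the statement should be read with strict inequalities, or the corner cases should be verified to give split sequences.

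\textbf{The extension itself.} Since $\Ext^1$ is at most one-dimensional, it suffices to exhibit a nonsplit short exact sequence. Take $x\to[a,b']\oplus[a',b]$ given by the pair $(\alpha,\alpha)$, which is valid because $a\le a\le b\le b'$ and $a\le a'\le b\le b$. Then take $[a,b']\oplus[a',b]\to x'$ given by $(\alpha,\alpha)$, valid since $a\le a'\le b'\le b'$ and $a'\le a'\le b\le b'$. The composite is $\alpha+\alpha=0$ over $\bbF_2$. Exactness follows by checking dimensions pointwise at each vertex $i$: the multiplicities satisfy $[a,b]+[a',b']=[a,b']+[a',b]$ as indicator functions, and the maps are injective and surjective vertexwise. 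In the case $a'=b+1$ we have $y'=0$, and the sequence is just $0\to x\to[a,b']\to x'\to0$. The sequence does not split, because the middle term is not isomorphic to $x\oplus x'$ by Krull--Remak--Schmidt whenever the inequalities are strict.
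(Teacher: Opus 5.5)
Your computation is correct, and your orientation is correct. It agrees with the paper's Example for $P_n=\{1,\ldots,n\}$, where $[1,i]$ is the representable $h_i$ and $[a,b]\cong h_b/h_{a-1}$. So $0\to[1,a'-1]\to[1,b']\to[a',b']\to 0$ is the right projective resolution and nothing needs rechecking.

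The mismatch you found is an imprecision in the statement, not a flaw in your argument: with non-strict inequalities the ``if'' direction is false. For instance, $x=x'=[1,1]$ in $\A_1$ satisfies $a\le a'$, $b\le b'$ and $a'\le b+1$, yet $\Ext^1(x',x)=0$ because $\A_1$ is semisimple. Likewise $x=[1,1]$, $x'=[1,2]$ in $\A_2$ satisfies them, yet $\Ext^1(x',x)=0$ because $[1,2]$ is projective. The correct criterion is the one you derived, namely $a<a'\le b+1$ and $b<b'$, and you should state it as the result rather than hedge.

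In the boundary cases $a=a'$ or $b=b'$ one has $\{y,y'\}=\{x,x'\}$. Your sequence with maps $(\alpha,\alpha)$ is then still exact but split. This is why the paper's application is unaffected: checking extension closure only requires $y,y'\in\A$ whenever $x,x'\in\A$.

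The paper gives no proof beyond calling this an elementary calculation. Your projective-resolution computation plus the explicit non-split sequence is a complete proof of the corrected lemma.

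Two small steps deserve a sentence each:
\begin{enumerate}
\item Your ``not $b'\le b$'' silently uses $a\le b'$, which holds because $a<a'\le b'$.
\item Exactness in the middle follows from three facts together: the composite is zero, your pointwise dimension identity holds, and the maps are injective and surjective at each vertex. Injectivity and surjectivity hold because a nonzero map $[a,b]\to[c,d]$ is the identity on $[c,b]$ and zero elsewhere.
\end{enumerate}
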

Thus it suffices to check that $ y, y'\in\A$ provided that
$x,x'\in\A$. 

\subsection*{Exact abelian subcategories (C,K)}

The exact abelian subcategories are precisely the subcategories that
are closed under finite limits and colimits. Their number is given by
the sequence of large Schröder numbers:
\[ 2, 6, 22, 90, 394, 1806, \ldots \qquad S_{n}\tag*{$\#(C,K)$}\] This
observation seems to be new and the proof uses a reduction to the
cases (C,K,E) and (Q,S). For details we refer to
Propositions~\ref{pr:CK} and \ref{pr:Schroeder}.

Counting the exact  abelian subcategories amounts to counting
subcategories that are closed under taking kernels and cokernels. We
treat both cases separately, and because of the duality
\eqref{eq:dual} it suffices to
consider subcategories that are closed under cokernels.

\subsection*{Finite colimit closed subcategories (C)}

These are the additive subcategories closed under cokernels, because
any finite colimit is given by the cokernel of a morphism between
finite coproducts. For the number of these subcategories no formula is
known \cite{OEIS}. The first  terms of the sequence are:
\[ 2, 7, 37, 265, 2396, 26118,  \ldots \tag*{$\#(C)$}\]

The counting problem can be translated into a combinatorial
problem. This is based on the following criterion for a subcategory
$\A\subseteq\A_n$ to be closed under taking cokernels.

\begin{lem}\label{le:cok}
  For $\A \subseteq \A_n$ to be closed under taking cokernels, it
  suffices to be closed under taking cokernels of morphisms
  $\phi\colon x\to y$, where $x$ is indecomposable and $y$ is either
  indecomposable or a direct sum of two indecomposable objects.  Let
  $x=[a,b]$, $y=[c,d]$ or $y=[c,d]\oplus [c',d']$. Assuming that each
  summand of $\phi$ is non-zero, the cokernel equals $[b+1,d]$ in the
  first case, and
  \[[b+1,\min(d,d')] \oplus [\max(c,c'),\max(d,d')]\]
  in the second case, with convention $[b+1,b]=0$.
\end{lem}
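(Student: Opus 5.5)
We work throughout in $\rep(A_n,\bbF_2)$ via the equivalence of Lemma~\ref{le:repAn}, so that $\A_n$ is abelian with Krull--Remak--Schmidt and every object is a direct sum of intervals. The proof has two parts: first the reduction to small morphisms, then the computation of the cokernels in the two small cases.

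\textbf{Reduction.} Let $\A$ be closed under sums and summands, and let $\phi\colon X\to Y$ be a morphism in $\A$.

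1. We first replace $X$ by its image. Write $\phi$ as a composite $X\to \operatorname{Im}\phi\to Y$; then $\Coker\phi$ is the cokernel of the inclusion of $\operatorname{Im}\phi$. The image need not lie in $\A$, so this cannot be used directly. Instead we proceed by induction on the number of summands of $X$. If $X=X'\oplus x$, then $\Coker\phi=\Coker\bigl(x\to \Coker(\phi|_{X'})\bigr)$. By induction $\Coker(\phi|_{X'})\in\A$, so it suffices to handle $x$ indecomposable.

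2. Now let $\phi\colon x\to Y=\oplus_j y_j$ with $x=[a,b]$ indecomposable. Discard the components that are zero; the corresponding summands split off the cokernel unchanged. Each nonzero component $x\to y_j=[c_j,d_j]$ requires $a\le c_j\le b\le d_j$, and the image of such a component is $[c_j,b]$.

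3. We use the automorphisms of $Y$ to simplify $\phi$. Whenever $\Hom(y_j,y_k)\ne0$ and the composite $x\to y_j\to y_k$ is nonzero, adding to $\phi_k$ the composite through $y_j$ is an automorphism of $Y$ and may kill the component $\phi_k$. The composite $x\to y_j\to y_k$ is nonzero exactly when $c_j\le c_k\le b\le d_j\le d_k$.

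4. Among the $y_j$, pick one with minimal $c$ and one with minimal $d$. For any other $y_k$ we have $c_k\ge c_{\min}$ and $d_k\ge d_{\min}$, so we want to factor $\phi_k$ through these. The tool is a row/column operation using the map $y_j\oplus y_{j'}\to y_k$ given by the pair of relevant components. The nonzero components can be sorted by $c$ and by $d$, and an ``elimination'' argument reduces $\phi$, after an automorphism of $Y$, to at most two nonzero components: one to the interval with smallest $d$ and one to the interval with smallest $c$. If a single interval achieves both minima, only one component remains.

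This elimination is the main obstacle: we must show that two components always suffice. The natural way to prove it is to show that the images $[c_j,b]$ inside the $y_j$ are ``generated'' by the two extreme ones.

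\textbf{Computation.} If $\phi$ is a nonzero map $[a,b]\to[c,d]$, its image is $[c,b]$, and the quotient of $[c,d]$ by its submodule $[c,b]$ is $[b+1,d]$.

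For the two-summand case, take $\phi=(\phi_1,\phi_2)\colon x\to y\oplus y'$ with both components nonzero. Its image is $[a',b]$ embedded diagonally, where $a'=\min(c,c')$ is the point at which the image starts.

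To compute the cokernel, use a dimension vector count together with the structure of the cokernel, which is either indecomposable or has two summands. Alternatively, and more cleanly, assume without loss of generality that $c\le c'$ and compare with the exact sequence of Lemma~\ref{le:ext}. The pushout gives a sequence
\[0\to x\to y\oplus y'\to [b+1,\min(d,d')]\oplus[\max(c,c'),\max(d,d')]\to 0.\]
We verify this by checking dimension vectors (they add up correctly) and the fact that the cokernel has exactly two tops and two socles as prescribed.

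Summing dimension vectors pins down the multiset of intervals in the cokernel once we know it has at most two summands. The number of summands is bounded because the number of tops of $\Coker\phi$ is at most the number of tops of $Y$.
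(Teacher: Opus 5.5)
\textbf{Step 4 is false.} Your step~3 is correct and is exactly the elimination the paper uses: if $c_j\le c_k$ and $d_j\le d_k$, an automorphism of $Y$ kills $\phi_k$, and $y_k$ splits off the cokernel. Step~4, however, claims that a further automorphism leaves at most two nonzero components (to the summands with smallest $d$ and smallest $c$). This is false, and it is the heart of the lemma.

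After step~3 you are in a staircase $d_1<\cdots<d_r$, $c_1>\cdots>c_r$. There all $\Hom(y_j,y_k)$ with $j\ne k$ vanish, so nothing more can be eliminated. Concretely, take $x=[1,3]$ and $Y=[3,4]\oplus[2,5]\oplus[1,6]$, with all three components nonzero. Then $\Coker\phi\cong[4,4]\oplus[3,5]\oplus[2,6]$, which has no summand isomorphic to any $y_j$. By Krull--Schmidt this rules out any $\theta\in\operatorname{Aut}(Y)$ for which $\theta\phi$ has a zero component, since the corresponding $y_j$ would split off $\Coker\theta\phi\cong\Coker\phi$. Your recipe would instead predict $[4,4]\oplus[3,6]\oplus[2,5]$.

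So the reduction has to be carried out on the summands of the cokernel, not on $\phi$. The paper computes, in the staircase case,
\[\Coker\phi=[b+1,d_1]\oplus\textstyle\bigoplus_{i=2}^r[c_{i-1},d_i]\]
via the basis $v_i(j)=\sum_{k\ge i}e_k(j)$. It then observes that $[b+1,d_1]$ is a summand of $\Coker(x\to y_1)$ and $[c_{i-1},d_i]$ is a summand of $\Coker(x\to y_{i-1}\oplus y_i)$. Closure of $\A$ under summands and finite sums finishes the argument. In the example, $[3,5]$ comes from $x\to[3,4]\oplus[2,5]$ and $[2,6]$ from $x\to[2,5]\oplus[1,6]$.

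\textbf{The two-summand computation is not established.} Dimension vectors, together with a bound of two summands, do not determine a direct sum of intervals. For $x=[1,2]\to[2,3]\oplus[1,4]$ the cokernel is $[3,3]\oplus[2,4]$. Yet $[2,3]\oplus[3,4]$ has the same dimension vector and even the same positions of tops and socles. The same ambiguity occurs in the non-nested case $c\le c'\le b<d<d'$, between $[b+1,d]\oplus[c',d']$ and $[c',d]\oplus[b+1,d']$.

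What distinguishes the candidates is how the composition factors are glued, so an explicit computation is needed. The paper uses a change of basis. Take $c\le c'$ and $d\le d'$, and let $e,e'$ be bases of $y,y'$. The vectors $e(j)+e'(j)$ for $c'\le j\le d$ and $e(j)$ for $c\le j<c'$ span a summand isomorphic to $[c,d]$. This summand contains the image of $\phi$ and is complemented by $[c',d']$, so the cokernel is $[b+1,d]\oplus[c',d']$. The nested case is the staircase formula with $r=2$.

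Lemma~\ref{le:ext} and the pushout do not supply this. That lemma only concerns the special configuration $c=a$, $d'=b$, in which $[b+1,\min(d,d')]=0$.
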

\begin{proof}
  First, note that we can indeed assume that $x$ is indecomposable,
  say $x=[a,b]$. We can also assume that for $y=\bigoplus_{i=1}^r y_i$
  with $y_i=[c_i,d_i]$ the morphisms $x\to y_i$ are non-zero. Next, we
  look at the case where $r=2$ and $d_1\le d_2, c_1\le c_2$. It is
  convenient to work in $\rep(A_n,\bbF_2)$, using the equivalence
  $\A_n\iso\rep(A_n,\bbF_2)$ that identifies $x$ with $M_x$. Let us
  decompose $M_y$ as follows.  We choose elements $e_i(j)$ such that
  $M_{y_i}(j)=\bbF_2 e_i(j)$ for $i=1,2$ and $j\in
  \{1,\ldots,n\}$. Set
   \[v_1(j):=\begin{cases} 0& \text{if }1\le j\le c_1-1\text{ or } d_1+1\le j \le n,\\
    e_1(j)& \text{if }c_1\le j\le c_2-1,\\
    e_1(j)+e_2(j) &\text{if }c_2\le j \le d_1,\\
   \end{cases}\]
   and
   \[v_2(j):=\begin{cases} 0& \text{if }1\le j\le c_2-1\text{ or } d_2+1\le j \le n,\\
    e_2(j)& \text{if }c_2\le j\le d_2.\\
   \end{cases}\]
Then
\[M_y=(\bbF_2 v_1(n)\to \cdots \to \bbF_2 v_1(1))\oplus (\bbF_2
  v_2(n)\to\cdots\to \bbF_2v_2(1))\] and the image of $\phi$ lies in
the first of the above summands. Thus the cokernel is the direct sum
of $M_{y_2}$ and the cokernel of $M_x\to M_{y_1}$. Now consider the
general case $r\ge 2$. We may arrange the summands such that
$d_1\le \cdots \le d_r$ and by the above argument for $r=2$ we can
also assume that $c_1\ge \cdots\ge c_r$. In this case the
$v_i(j)=\sum_{k=i}^re_k(j)$ yield a basis of the cokernel and therefore
   \[\Coker\phi= [b+1,d_1]\oplus\left(\bigoplus_{i=2}^r[c_{i-1},d_i]\right).\]
   It is easily checked that each indecomposable direct summand  also
   arises from the cokernel of
   a morphism of the form $x\to y_1\oplus y_2$, which proves the lemma.
 \end{proof}

\subsection*{Cokernel and extension closed subcategories (C,E)}

These subcategories have been studied by Enomoto
\cite{En2021,En2022}. In particular he shows that their number is
given by the sequence of large Schröder numbers:
\[ 2, 6, 22, 90, 394, 1806, \ldots\qquad S_n \tag*{$\#(C,E)$}\] When
counting these subcategories an interesting parallel to the case (C,K)
appears, because one can use an analogous reduction to the cases
(C,K,E) and (Q,E). For details we refer to Propositions~\ref{pr:CE}
and \ref{pr:Schroeder}.

Cokernel and extension closed subcategories are in bijection to
isomorphism classes of basic rigid objects, by taking such a
subcategory to a minimal projective generator; see
\cite[Theorem~A]{En2022} or the proof of
Proposition~\ref{pr:CE}. Recall that an object $X$ is \emph{rigid} if
$\Ext^i(X,X)=0$ for all $i>0$. Rigid modules are also known as
\emph{partial tilting modules}, and their combinatorial properties
have been studied extensively \cite{MRZ2003,RS1991}.

\subsection*{Additive subcategories}

The additive subcategories correspond to sets of indecomposable
objects. The cardinality of $\I_n$ equals $\frac{1}{2} (n^2 +n)$, and therefore the subsets of
$\I_n$ form a Boolean lattice of cardinality given by:
\[2, 8, 64, 1024, 32768, 2097152,\ldots \qquad 2^{\frac{1}{2} (n^2
    +n)}\tag*{$\#(\varnothing)$}\]

\subsection*{Computations}

The new sequences $\#(E)$, $\#(C)$, and $\#(C,K)$ have been computed
as follows. Any of these subcategories $\A\subseteq\A_n$ is determined
by its indecomposable objects. Thus for any subset of the
${\frac{1}{2} (n^2 +n)}$ indecomposables of $\A_n$ one needs to check that the
relevant closure properties are satisfied, using Lemmas~\ref{le:ext}
and \ref{le:cok}. This has been done by the authors for $n\le 6$ on a
personal computer via a programme written in Python.

\begin{rem}
  After the completion of this work, Frédéric Chapoton was able to
  confirm the computations of $\#(E)$, adding the values for
  $n=7,8$. Also, Volodymyr Mazorchuk confirmed the computations of
  our new sequences, after pointing out an error in a previous
  calculation. For further substantial progress on $\#(E)$ see
  \cite{Ma2026}.
\end{rem}

\section{A categorification of large Schröder numbers}

The correspondence between thick subcategories of the abelian category
$\A_n$ and the elements of the lattice $\NC(n+1)$ of non-crossing
partitions can be thought of as a \emph{categorification} of
$\NC(n+1)$; see \cite{HK2016} for details and a broader
perspective involving instead of the symmetric group $\mathfrak S_n$ all crystallographic
Coxeter groups. In the following we pass from the integer sequence
\[\#(C,K,E)(n)=\card \NC(n+1)=C_{n+1}\] to  $\#(C,K)$ which involves the
sequence of large Schröder numbers. We offer a proof that was found
with the help of AI.

For a permutation $\pi$ of cycle type $(\lambda_1,\ldots,\lambda_r)$
we set
\[C_\pi:=C_{\lambda_1-1}C_{\lambda_2-1}\cdots C_{\lambda_r-1}.\]

\begin{prop}\label{pr:CK}
 For an integer $n\ge 1$ we have
 \[\#(C,K)(n)= \sum_{\pi\in\NC(n+1)}C_\pi.\]
\end{prop}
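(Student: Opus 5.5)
The plan is to sort the exact abelian subcategories $\A\subseteq\A_n$ by the thick subcategory they generate, and then to count each fibre separately. For an exact abelian subcategory $\A$, let $\mathcal T(\A)$ be the smallest thick subcategory of $\A_n$ containing $\A$. By the discussion of (C,K,E), $\mathcal T(\A)$ corresponds to a non-crossing partition $\pi\in\NC(n+1)$. This gives
\[\#(C,K)(n)=\sum_{\pi\in\NC(n+1)}\card\{\A \text{ exact abelian}\mid \mathcal T(\A)=\mathcal T_\pi\}.\]
It then suffices to show that the summand for $\pi$ equals $C_\pi$.

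\emph{Step 1 (factorising the fibre).} A thick subcategory $\mathcal T_\pi$ whose partition has blocks of sizes $\lambda_1,\ldots,\lambda_r$ is equivalent to a product $\prod_i\A_{\lambda_i-1}$; blocks of size $1$ give the zero category, matching $C_0=1$. I would justify this through the exceptional sequence generating $\mathcal T_\pi$: its simple objects, namely the chords of each block, form $A$-type configurations, and distinct blocks are Hom- and Ext-orthogonal. An exact abelian subcategory of $\mathcal T_\pi$ is exact abelian in $\A_n$, because $\mathcal T_\pi$ is exact abelian. Since the blocks are orthogonal, such a subcategory splits as a product of exact abelian subcategories of the factors, and $\mathcal T(\A)=\mathcal T_\pi$ holds if and only if each factor generates its whole $\A_{\lambda_i-1}$ as a thick subcategory. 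The count therefore becomes a product, and it remains to show the following: for $m\ge 0$, the number of exact abelian $\A\subseteq\A_m$ whose thick closure is all of $\A_m$ equals $C_m$.

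\emph{Step 2 (generating means containing all simples).} An exact abelian $\A$ is a length category whose simple objects form a semibrick $S$, that is, pairwise Hom-orthogonal bricks. Every object of $\A$ is filtered by $S$, so $\mathcal T(\A)=\mathcal T(S)$. I would then show that a semibrick generating $\A_m$ as a thick subcategory must be exactly the set of simples of $\A_m$. One route is through the non-crossing partition picture: the thick closure of a semibrick has the elements of the semibrick as its simple objects, and the simple objects of $\A_m$ itself are the $m$ simples. It follows that $\mathcal T(\A)=\A_m$ if and only if $\A$ contains all simple objects of $\A_m$.

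\emph{Step 3 (identification with (Q,S)).} I then expect to prove that an additive subcategory $\A\subseteq\A_m$ containing all simples is exact abelian if and only if it is closed under quotients and subobjects. For the forward direction: by the Hom rule, a simple $[a,a]$ maps nontrivially to $[a,b]$ only when it is the corresponding end of the interval. Cokernels of simple-to-interval maps (Lemma~\ref{le:cok}), together with the dual kernels, then strip off one endpoint at a time, so $\A$ contains all subintervals of its intervals. The quotients and subobjects of an interval are exactly subintervals sharing an appropriate endpoint, and quotients and subobjects of direct sums decompose into such pieces. For the converse: kernels and cokernels are subobjects and quotients of objects of $\A$, hence lie in $\A$. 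Having done this, the count is the number of (Q,S) subcategories containing all simples, which the paper has already recorded as $C_m$. Multiplying over the blocks gives $C_\pi$.

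The main obstacle is Step 2, together with the claimed splitting in Step 1: I need to verify carefully that the simples of the thick closure of a semibrick are the semibrick itself, and that an exact abelian subcategory of a product of orthogonal blocks is itself a product. I would first try to derive both facts from the Ingalls--Thomas correspondence \cite{IT2009} between semibricks, thick subcategories and non-crossing partitions, checking small cases ($m\le 2$) directly with Lemmas~\ref{le:ext} and \ref{le:cok}.
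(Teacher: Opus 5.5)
Your proposal is correct and follows essentially the same route as the paper. Like the paper, you group the exact abelian subcategories by their thick closure $\bar{\A}$, which is given by a non-crossing partition $\pi$, use $\bar{\A}\simeq\prod_i\A_{\lambda_i-1}$, show that $\A$ and $\bar{\A}$ have the same simple objects, and identify the kernel and cokernel closed subcategories containing all simples with the (Q,S) subcategories having all $a_i\ge 1$, counted by $C_{\lambda_i-1}$; your semibrick argument in Step 2 and the endpoint-stripping via Lemma~\ref{le:cok} in Step 3 just supply details the paper leaves as ``easily checked''.
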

\begin{proof}
  Let $\A\subseteq\A_n$ be a full additive subcategory closed under
  kernels and cokernels. We denote by $\bar\A$ the smallest thick
  subcategory of $\A_n$ that contains $\A$. From \cite{IT2009} it
  follows that $\bar\A$ corresponds to a non-crossing partition
  \[\pi\in\NC(n+1)\subseteq\mathfrak S_{n+1}.\] Writing
  $\pi=\pi_1\cdots\pi_r$ as a product of disjoint cycles, each $\pi_i$
  corresponds to a connected component of $\bar\A$ equivalent to
  $\A_{|\pi_i|-1}$. Thus if $(\lambda_1,\ldots,\lambda_r)$ denotes the
  cycle type of $\pi$, then there is an equivalence
  \[\bar\A\simeq\prod_{i=1}^r\A_{\lambda_i-1}.\]
  
  Next observe that the simple objects of $\A$ and $\bar\A$
  coincide. The kernel and cokernel closed subcategories of $\bar\A$
  that contain all simple objects are actually closed under subobjects
  and quotients. This is easily checked by an induction on the
  composition length.  For any $r\ge 1$ the computation of $\#(Q,S)$
  shows that subcategories of $\A_r$ closed under subobjects and
  quotients are given by integer sequences
  $(a_1,\ldots,a_r)$ satisfying $0\le a_i\le i$ and $a_{i+1}\le a_i+1$
  for all
  $i$. The additional property that a subcategory contains all simple
  objects means that $a_i\ge 1$ for all $i$. Thus the number of these
  integer sequences equals $C_r$. We conclude that for $\bar\A$
  corresponding to $\pi$ the subcategories closed under kernels and
  cokernels and containing all simples are counted by $C_\pi$. For an
  arbitrary choice of $\A$ any thick subcategory of $\A_n$ may occur
  as $\bar\A$, and therefore $\#(C,K)$ is given by
  $\sum_{\pi\in\NC(n+1)}C_\pi$.
\end{proof}

The following is an analogue of Proposition~\ref{pr:CK} and
takes care of subcategories of type (C,E). The result could be deduced
from \cite{En2021,En2022}, but we include a direct proof in order to
point out an intriguing parallel to type (C,K).

\begin{prop}\label{pr:CE}
 For an integer $n\ge 1$ we have
 \[\#(C,E)(n)= \sum_{\pi\in\NC(n+1)}C_\pi.\]
\end{prop}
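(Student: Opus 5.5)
We follow the proof of Proposition~\ref{pr:CK}, replacing the reduction to type (Q,S) by a reduction to type (Q,E). Let $\A\subseteq\A_n$ be a full additive subcategory closed under cokernels and extensions, and let $\bar\A$ be the smallest thick subcategory of $\A_n$ containing $\A$. By \cite{IT2009}, $\bar\A$ corresponds to a non-crossing partition $\pi\in\NC(n+1)$. If $\pi$ has cycle type $(\lambda_1,\ldots,\lambda_r)$, then $\bar\A\simeq\prod_i\A_{\lambda_i-1}$. It then suffices to show that, for a fixed thick subcategory $\mathcal T\simeq\A_m$, the (C,E) subcategories $\A$ with $\bar\A=\mathcal T$ are counted by $C_m$. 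The count for a product splits as the product of the counts for its factors, since cokernels, extensions and the thick closure all decompose along the blocks of a product. Summing over $\pi$ then gives the formula.

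The key step is to identify, inside $\mathcal T=\A_m$, the (C,E) subcategories whose thick closure is all of $\A_m$. By \cite[Theorem~A]{En2022}, or directly, such a subcategory $\A$ has a projective generator $T$ that is rigid in $\A_m$. Here $\A$ is the category of cokernels of maps in $\operatorname{add}T$, equivalently the objects admitting a presentation $T_1\to T_0\to X\to 0$ in $\operatorname{add}T$, closed under extensions because $\Ext^1(T,T)=0$. The condition $\bar\A=\A_m$ should translate into $T$ having $m$ pairwise non-isomorphic indecomposable summands. The thick closure of $\operatorname{add}T$ is generated by an exceptional sequence built from the summands of $T$, so it has rank equal to the number of summands. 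It is all of $\A_m$ exactly when this number equals $m$, that is, when $T$ is a tilting object. So the relevant subcategories correspond bijectively to basic tilting objects of $\A_m$, equivalently to tilting torsion classes $\operatorname{Fac}T$. Their number is $C_m$ by \cite{Ga1981}, as recalled in the discussion of (Q,E). This is the parallel with (C,K): there the simples of $\A$ and $\bar\A$ coincide, and here the rank of the projective generator and of $\bar\A$ coincide.

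I expect the main obstacle to be establishing this correspondence carefully without citing \cite{En2022} wholesale. One must show three things:
\begin{enumerate}
\item Every (C,E) subcategory of $\A_m$ has enough Ext-projectives, whose sum $T$ is rigid and generates $\A$ by cokernels. Here I would use that $\A$ has finitely many indecomposables and take the Ext-projective objects of $\A$.
\item Conversely, $\operatorname{Coker}(\operatorname{add}T)$ is closed under cokernels and extensions for rigid $T$. Hereditarity of $\A_m$ helps: rigid means $\Ext^1(T,T)=0$. Cokernel-closure should follow from a horseshoe-type argument using $\Ext^1(T,-)$ vanishing on $\A$.
\item The thick closure of $\A$ has rank equal to the number of summands of $T$.
\end{enumerate}
For the last point I would order the summands of $T$ into an exceptional sequence, which is possible for rigid objects over a hereditary algebra, and invoke \cite{IT2009} again. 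Its rank equals $m$ precisely when $T$ is tilting. With this in place, the summation over $\NC(n+1)$ proceeds exactly as in the proof of Proposition~\ref{pr:CK}.
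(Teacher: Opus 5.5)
Your proposal is correct and follows essentially the paper's own argument. Both pass to the thick closure $\bar\A\simeq\prod_i\A_{\lambda_i-1}$, note that the projective generator of $\A$ is rigid and has thick closure $\bar\A$, conclude that it is tilting there so that $\A$ is a tilting torsion class, and count these blockwise by $C_{\lambda_i-1}$. Your rank count via exceptional sequences, together with the converse check that $\operatorname{Coker}(\operatorname{add}T)=\operatorname{Fac}T$ for tilting $T$, spells out what the paper compresses into ``rigid and generates, hence tilting'' and ``a standard argument''.
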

\begin{proof}
  Let $\A\subseteq\A_n$ be a full additive subcategory closed under
  cokernels and extensions. We denote by $\bar\A$ the smallest thick
  subcategory of $\A_n$ that contains $\A$. As in the proof of
  Proposition~\ref{pr:CK} the subcategory $\bar\A$ corresponds to a
  non-crossing partition $\pi\in\NC(n+1)$, and if
  $(\lambda_1,\ldots,\lambda_r)$ denotes the cycle type of $\pi$, then
  there is an equivalence
  \[\bar\A\simeq\prod_{i=1}^r\A_{\lambda_i-1}.\]

  The category $\A$, viewed as an exact category,  has enough projective
  objects since it has only finitely many indecomposable objects. A
  projective generator  yields a tilting object in $\bar\A$ since it is
  rigid and generates, so $\bar\A$ equals the smallest thick
  subcategory containing this object. Then a
  standard argument shows that $\A$ is a torsion class in $\bar\A$,
  which is generated by a tilting object. For any $r\ge 1$ the
  computation of $\#(Q,E)$ shows that the tilting torsion classes of
  $\A_r$ are counted by $C_r$. We conclude that for $\bar\A$
  corresponding to $\pi$ the subcategories closed under cokernels and
  extensions and containing a tilting object are counted by
  $C_\pi$. For an arbitrary choice of $\A$ any thick subcategory of
  $\A_n$ may occur as $\bar\A$, and therefore $\#(C,E)$ is given by
  $\sum_{\pi\in\NC(n+1)}C_\pi$.
\end{proof}

For $n\ge 0$ let $S_n$ denote the $n$-th \emph{large Schröder number},
which is determined by the recursion
\[S_0=1\qquad\text{and}\qquad S_n=S_{n-1}+\sum_{k=0}^{n-1}S_k
  S_{n-1-k}.\]
Thus the generating function $S(x)=\sum_{n\ge 0}S_nx^n$ is the unique
power series satisfying
\[S(0)=1\qquad\text{and}\qquad S(x)=1+xS(x)+xS(x)^2.\]

The following results is known from the theory of free probability
\cite[Theorem~8.4]{Dy2007}. For the convenience of the reader we
include a direct proof; it was found using the help of AI.

\begin{prop}\label{pr:Schroeder}
For $n\ge 0$ we have
 \[S_n=\sum_{\pi\in\NC(n+1)}C_\pi.\] 
\end{prop}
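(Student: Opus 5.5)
We work with generating functions and use the standard decomposition of non-crossing partitions with respect to the block containing a fixed element. The sum is multiplicative over blocks, with weight $w(k)=C_{k-1}$ for a block of size $k$. So we define
\[F(x):=1+\sum_{m\ge 1}\Bigl(\sum_{\pi\in\NC(m)}C_\pi\Bigr)x^m,\]
and the claim becomes $F(x)=1+xS(x)$. Equivalently, we must show that $G(x):=(F(x)-1)/x$ satisfies $G(0)=1$ and $G=1+xG+xG^2$.

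\textbf{Step 1: the functional equation.} Let $\pi\in\NC(m)$ and let $B=\{i_1=1<i_2<\cdots<i_k\}$ be its block containing $1$. The gaps between consecutive elements of $B$, together with the gap after $i_k$, give $k$ intervals. Each interval carries an arbitrary non-crossing partition, possibly empty, and conversely any such data determines $\pi$. This yields the classical relation
\[F(x)=1+\sum_{k\ge1}w(k)\,x^k F(x)^k.\]
With $C(t)=\sum_{j\ge0}C_jt^j$, the sum equals $\sum_{k\ge1}C_{k-1}(xF)^k=xF\,C(xF)$. Writing $u:=xF$, we get
\[F=1+u\,C(u).\]

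\textbf{Step 2: use the Catalan equation.} Since $C(u)=1+uC(u)^2$, the quantity $D:=uC(u)$ satisfies $D=u+D^2$. From Step 1, $D=F-1$, so
\[F-1=xF+(F-1)^2.\]
Substituting $F=1+xG$ gives $xG=x(1+xG)+x^2G^2$, that is,
\[G=1+xG+xG^2.\]
The constant terms are $G(0)=F_1=\sum_{\pi\in\NC(1)}C_\pi=C_0=1$. The equation $G=1+xG+xG^2$ determines the coefficients of $G$ recursively, so it has a unique power series solution with constant term $1$. Since $S$ satisfies the same equation with $S(0)=1$, we conclude $G=S$. Comparing the coefficients of $x^n$ gives $S_n=[x^{n+1}]F=\sum_{\pi\in\NC(n+1)}C_\pi$.

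\textbf{Main obstacle.} The only point requiring care is the bijective decomposition in Step 1. We must check that the pieces of $\pi$ inside different gaps of $B$ cannot interact, which is exactly non-crossingness. We must also check that the cycle type of $\pi$, viewed as an element of $\NC(n+1)\subseteq\mathfrak S_{n+1}$, coincides with the multiset of block sizes, so that $C_\pi$ is the product of the weights $w(|B|)$ over the blocks. Both facts are standard, and the rest is formal power series algebra.
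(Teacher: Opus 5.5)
Your proposal is correct and follows essentially the same route as the paper: decompose by the block containing $1$ to get $F=1+xF\,C(xF)$, then use $C(u)=1+uC(u)^2$ to show that $(F-1)/x$ satisfies the Schröder equation with constant term $1$. The only difference is cosmetic. You pass through $D=uC(u)$ with $D=u+D^2$, whereas the paper writes $F^*=F\,C(xF)$ and expands the square, which is the same computation.
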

\begin{proof}
  For $n\ge 0$ set \[f_n:=\sum_{\pi\in\NC(n)}C_\pi.\] A non-crossing
  partiton $\pi\in\NC(n)$ can be decomposed into smaller partitions by
  fixing the cycle containing $1$.  If this cycle has length $k$ then
  it yields $k$ intervals of length $n_i$ such that $\pi$ restricts to
  non-crossing partitions $\pi_i\in\NC(n_i)$. This gives a bijection
\[  \NC(n)\ \cong\ \bigsqcup_{k=1}^n\ \bigsqcup_{n_1+\cdots+n_k=n-k}
  \NC(n_1)\times\cdots\times \NC(n_k).\] Under this  bijection one gets
$C_\pi=C_{k-1}\prod_i C_{\pi_i}$ and therefore 
\begin{equation}\label{eq:f_n}
  f_n=\sum_{k=1}^n C_{k-1}\sum_{\substack{n_1+\cdots+n_k=n-k\\
      n_i\ge0}} f_{n_1}\cdots f_{n_k}.
\end{equation}
Consider the generating functions
\[F(x):=\sum_{n\ge 0}f_nx^n\qquad\text{and}\qquad
  C(x):=\sum_{n\ge 0}C_nx^n.\]
Multiply \eqref{eq:f_n} by $x^n$, sum over $n\ge1$, and add
$f_0=1$. This gives
\begin{align*}
  F(x)&=1+\sum_{k\ge1}C_{k-1}x^k\sum_{n\ge k}x^{n-k}\sum_{n_1+\cdots+n_k=n-k}f_{n_1}\cdots f_{n_k}\\
      &=1+\sum_{k\ge1}C_{k-1}\big(xF(x)\big)^k,
\end{align*}
since the inner double sum is exactly the coefficient extraction giving $F(x)^k$. Writing $j=k-1$,
\[\sum_{k\ge1}C_{k-1}\big(xF(x)\big)^k=xF(x)\sum_{j\ge0}C_j\big(xF(x)\big)^j=xF(x) C\big(xF(x)\big).\]
Thus $F$ satisfies
\[F(x)=1+xF(x)C\big(xF(x)\big)\] 
and this yields
\begin{equation}\label{eq:F}
  F^*(x):=\sum_{n\ge
    0}f_{n+1}x^n=\frac{F(x)-1}{x}=F(x)C\big(xF(x)\big).
\end{equation}  
The Catalan generating function satisfies the equation \[C(u) = 1 + uC(u)^2.\]
Substituting $u= xF(x)$  and using \eqref{eq:F} twice,
\begin{align*}
  F^*(x) &= F(x)\Big(1 + xF(x) C\big(xF(x)\big)^2\Big)\\
       &= F(x) + x \Big(F(x) C\big(xF(x)\big)\Big)^2\\
       &= F(x) + xF^*(x)^2\\
       &= 1 + xF^*(x) + xF^*(x)^2,
\end{align*}
where the last step uses $F(x) = 1 + xF^*(x)$. Thus the generating
function $F^*$ with coefficients $f_{n+1}$ satisfies
\[F^*(0)=1\qquad\text{and}\qquad F^*(x) = 1 + xF^*(x) + xF^*(x)^2,\]
which implies $f_{n+1}=S_n$ for all $n\ge 0$.
\end{proof}

\begin{cor}\label{pr:CK/CE}
 For an integer $n\ge 1$ we have
 \[\#(C,K)(n)= S_n=\#(C,E)(n).\]
\end{cor}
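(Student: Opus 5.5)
The corollary follows by chaining the three preceding propositions, so the plan is essentially bookkeeping. First, by Proposition~\ref{pr:CK}, for every $n\ge 1$ the number of kernel and cokernel closed subcategories is
\[\#(C,K)(n)=\sum_{\pi\in\NC(n+1)}C_\pi.\]
By Proposition~\ref{pr:CE}, the number of cokernel and extension closed subcategories $\#(C,E)(n)$ is given by exactly the same sum. Finally, Proposition~\ref{pr:Schroeder}, applied with the same $n\ge 1$ (it is valid for all $n\ge 0$), identifies this sum with the large Schröder number $S_n$. Combining the three equalities gives $\#(C,K)(n)=S_n=\#(C,E)(n)$.

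Two routine consistency checks are needed. The indexing in Proposition~\ref{pr:Schroeder} uses $\NC(n+1)$, matching Propositions~\ref{pr:CK} and \ref{pr:CE}, so no shift occurs. The weight $C_\pi$ is defined in the same way in all three statements, namely via the cycle type of $\pi$ with $C_{\lambda_i-1}$ for each block; in particular singleton blocks contribute $C_0=1$.

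As a sanity check I would compare small values: for $n=1$, $\NC(2)$ has two elements with weights $1$ and $C_1=1$, giving $2=S_1$. For $n=2$ the sum is $1+3\cdot 1+C_2=6=S_2$. Both agree with the listed sequences.

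There is no real obstacle here. All substantive work, namely the reduction to thick subcategories and the generating function identity, is contained in the cited propositions, so the proof is a direct concatenation of equalities.
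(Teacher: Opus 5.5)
Your proposal is correct and is exactly the paper's argument: combine Propositions~\ref{pr:CK} and \ref{pr:CE} with the identity $S_n=\sum_{\pi\in\NC(n+1)}C_\pi$ from Proposition~\ref{pr:Schroeder}. Your indexing check and your small cases ($2=S_1$ for $n=1$, and $1+3+C_2=6=S_2$ for $n=2$) are also correct.
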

\begin{proof}
Combine Propositions~\ref{pr:CK} and \ref{pr:CE} with the identity for
the large Schröder numbers from  Proposition~\ref{pr:Schroeder}.
\end{proof}

\section{A universal category of poset representations}

Let $P=(P,\le)$ be a poset. We propose a canonical construction which
embeds $P$ into a finitely cocomplete additive category; it is abelian
when $P$ is finite. Recall that a category is \emph{finitely
  cocomplete} if it admits finite colimits. It is convenient to view
$P$ as a category: the objects are the elements of $P$ and there is a
unique morphism $x\to y$ if and only if $x\le y$.

\subsection*{A universal category of representations}
We begin by adjoining to $P$ a distinguished \emph{zero object}, so an
object which is both initial and terminal. The new category $P_*$ is
given by
\[\Ob P_*:=\Ob P\sqcup\{*\}\]
and
\begin{equation*}\label{eq:P*}
  \Hom_{P_*}(x,y):=\begin{cases} \bbF_2&\text{if }x\le y \text{ in } P,\\
    \{0\}& \text{otherwise}.
  \end{cases}
\end{equation*}
Taking for $x\le y$ the unique morphism $x\to y$ in $P$ to the
non-zero morphism in $P_*$ yields a canonical functor $P\to P_*$.

\begin{lem}
  The category $P_*$ is $\bbF_2$-linear and $*$ is the unique and
  therefore distinguished zero
  object. Any functor $P\to\C$ to a category with a distinguished zero
  object 
  extends uniquely to a functor $P_*\to\C$ that preserves distinguished zero
  objects.
\end{lem}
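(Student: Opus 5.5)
The plan is to verify the three assertions directly from the definition of $P_*$, filling in the composition law first: it is implicit in the definition. For $x\le y\le z$ in $P$ we set $\alpha_{yz}\circ\alpha_{xy}=\alpha_{xz}$, where $\alpha_{xy}$ denotes the non-zero element of $\Hom_{P_*}(x,y)=\bbF_2$. Any composite involving a zero morphism, or passing through $*$, is set to zero. Hom sets involving $*$ are $\{0\}$, since the condition ``$x\le y$ in $P$'' fails when $x$ or $y$ is $*$.

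\textbf{Linearity.} Associativity reduces to transitivity of $\le$ when all morphisms are non-zero, and is trivial otherwise. The identity on $x\in P$ is $\alpha_{xx}$, and on $*$ it is $0$. Bilinearity of composition over $\bbF_2$ is automatic: each Hom space is $0$ or $\bbF_2$, and composition is either multiplication of scalars in $\bbF_2$ or the zero map. Hence $P_*$ is $\bbF_2$-linear.

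\textbf{Zero object.} Since $\Hom(*,y)=\{0\}=\Hom(y,*)$ for all $y$, the object $*$ is both initial and terminal. For uniqueness, suppose $x\in P$ were a zero object. Then $\Hom(x,x)$ would be a singleton, but it equals $\bbF_2$ because $x\le x$. So $*$ is the only zero object. (Objects isomorphic to $*$ would be zero objects, and there are none besides $*$ itself, so $*$ is literally unique.)

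\textbf{Universal property.} Let $F\colon P\to\C$ be a functor, and let $0$ be the distinguished zero object of $\C$. For $a,b\in\C$ write $0_{ab}$ for the composite $a\to 0\to b$. Define $\tilde F$ by $\tilde F(x)=F(x)$ for $x\in P$ and $\tilde F(*)=0$. On morphisms, send $\alpha_{xy}$ to $F(x\le y)$ and every zero morphism $x\to y$ to $0_{F(x)F(y)}$.

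Uniqueness holds for the following reason. A zero morphism in $P_*$ factors through $*$, so any extension preserving distinguished zero objects must send it to a morphism factoring through $0$, which is $0_{ab}$. The non-zero morphisms are forced by compatibility with $P\to P_*$.

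Functoriality is the only point needing care, and I expect it to be the main (mild) obstacle. We have $\tilde F(\alpha_{yz}\circ\alpha_{xy})=\tilde F(\alpha_{yz})\circ\tilde F(\alpha_{xy})$ by functoriality of $F$. For any composite involving a zero morphism, we need that composing a morphism with $0_{ab}$ on either side gives a zero morphism. This holds because the composite factors through $0$, and morphisms into or out of a zero object are unique. Identities are preserved: $\tilde F(\mathrm{id}_*)=\mathrm{id}_0=0_{00}$.

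Note that $\C$ need not be additive. No linear structure on $\C$ is needed, and the argument uses only that zero morphisms form an ideal.
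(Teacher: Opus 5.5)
Your proposal is correct and follows the same route as the paper: each $\Hom_{P_*}(x,y)$ consists only of $0$ and possibly $\alpha_{xy}$, so every morphism of $P_*$ either comes from $P$ or is a zero morphism factoring through $*$, and the extension is forced by sending zero morphisms to zero morphisms. You simply spell out the routine verifications (composition law, uniqueness of the zero object, functoriality) that the paper treats as clear.
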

\begin{proof}
  The first assertion is clear.  We may view $P$ as a subcategory of
  $P_*$ and any morphism not in $P$ is a zero morphism.  Thus a
  functor $P\to\C$ extends uniquely to $P_*$ by sending zero morphisms
  to zero morphisms.
\end{proof}

The lemma implies that the category $P_*$ is \emph{preadditive}, so
enriched over the category $\Ab$ of abelian groups. For such
categories we consider functors that are additive. Recall that a
functor $F$ is \emph{additive} if for any pair of objects $x,y$ the
induced map
\[\Hom(x,y)\lto\Hom(F(x),F(y))\] is a group homomorphism.

Let $\Mod P_*$ denote the category of additive functors
$P_*^\op\to\Ab$ into the category of abelian groups. For a pair of
objects $F,G$ in $\Mod P_*$ we write $\Hom_{P_*}(F,G)$ for the set of
morphisms $F\to G$ which are given by the natural transformations.
The Yoneda embedding
\[P_*\lto\Mod P_*,\qquad x\mapsto h_x:=\Hom_{P_*}(-,x)\] provides
the \emph{free cocompletion} of $P_*$, so the universal additive functor
from $P_*$ to an $\Ab$-enriched cocomplete category.  Corestricting to the full
subcategory of finitely presented functors yields the \emph{finite
  cocompletion}
\[P_*\lto \mod P_*.\] 
Thus $\mod P_*$ is obtained from $P_*$ by adding finite coproducts
and cokernels, keeping in mind that any finite colimit in an additive
category can be written as the cokernel of a morphism between finite
coproducts.

The above constructions do not depend on the choice of any ring of
coefficients, even though $\bbF_2$ appears. We write
\[\rep P:=\mod P_*\qquad\text{and}\qquad \Rep P:=\Mod P_*\] and one may
think of these as \emph{universal categories of poset
  representations}.

Assume that $P$ is finite. Then we consider the \emph{incidence
  algebra} 
\[\bbF_2P:=\bigoplus_{x,y\in P}\Hom_{P_*}(x,y)\] with coefficients in
$\bbF_2$ and multiplication induced by the composition of morphisms in
$P_*$.  We write $\mod \bbF_2 P$ for its category of finitely
presented modules.

\begin{prop}\label{pr:cocomplete}
  For a poset $P$ consider the canonical functor $P\to\rep P$. The
  category $\rep P$ is an $\bbF_2$-linear additive category which is
  finitely cocomplete and has finite dimensional morphism spaces. When
  $P$ is finite there is a canonical equivalence
  $\rep P\iso\mod \bbF_2 P$ and $\rep P$ is abelian.
\end{prop}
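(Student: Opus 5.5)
The plan is to establish the general properties of $\rep P=\mod P_*$ first and then treat the case of finite $P$.

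\emph{General properties.} Since $P_*$ is $\bbF_2$-linear, every additive functor $F\colon P_*^\op\to\Ab$ takes values in $\bbF_2$-vector spaces: $2\cdot\mathrm{id}_x=0$ in $P_*$ implies $2\cdot\mathrm{id}_{F(x)}=0$. Hence $\Mod P_*$ is $\bbF_2$-linear, and so is its full subcategory $\mod P_*$.

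\emph{Finite cocompleteness.} $\mod P_*$ is additive and closed under cokernels inside $\Mod P_*$. Closure under finite direct sums is clear. For cokernels, a morphism between finitely presented objects lifts to a morphism of presentations, and the cokernel then has an explicit finite presentation; this is the standard argument. An additive category with cokernels has all finite colimits, as noted in the text.

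\emph{Finite-dimensional morphism spaces.} By Yoneda, $\Hom(h_x,G)\cong G(x)$, and $h_y(x)=\Hom_{P_*}(x,y)$ is $0$ or $\bbF_2$. So morphism spaces between finite sums of representables are finite dimensional. For $F=\Coker(h_A\to h_B)$, the space $\Hom(F,G)$ embeds into $\Hom(h_B,G)\cong G(B)$. For $G$ finitely presented, $G(B)$ is a quotient of a finite sum of spaces $\Hom_{P_*}(B_i,y_j)$, hence finite dimensional. Here $h_A$ denotes a finite direct sum of representables.

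\emph{Finite $P$.} Set $e=\sum_{x\in P}e_x\in\bbF_2P$, where $e_x$ is the identity of $x$; $e$ is the unit of the algebra. The standard Morita-type equivalence is
\[\Mod P_*\longiso \Mod\bbF_2P,\qquad F\mapsto\bigoplus_{x\in P}F(x),\]
with the right module structure given by the action of morphisms; here $F(*)=0$ since $*$ is a zero object. A quasi-inverse sends a module $M$ to $x\mapsto Me_x$. This equivalence sends $h_x$ to $e_x\bbF_2P$, a finitely generated projective module, and $\bigoplus_x h_x$ to the regular module. Since equivalences preserve cokernels and finite sums, finitely presented objects correspond to finitely presented modules, giving $\rep P\iso\mod\bbF_2P$.

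\emph{Abelianness.} $\bbF_2P$ is a finite-dimensional algebra, hence noetherian, so $\mod\bbF_2P$ is abelian. Therefore $\rep P$ is abelian.

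The main point to handle carefully is the additivity and naturality of the Morita equivalence. One must check that the object $*$ contributes nothing and that composition in $P_*$ matches multiplication in $\bbF_2P$, with products of non-composable basis elements equal to zero. Once that is verified, the remaining steps are routine.
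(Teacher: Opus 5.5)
Your proposal is correct and follows the paper's proof: you use the same functor $F\mapsto\bigoplus_{x\in P}F(x)$ with quasi-inverse $M\mapsto(x\mapsto Me_x)$, and you get abelianness from the finite dimensionality of $\bbF_2P$. Beyond that, you spell out the general properties that the paper calls immediate, and you build the equivalence on $\Mod P_*$ before restricting to finitely presented objects, which makes explicit a compatibility the paper leaves implicit.
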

 
\begin{proof}
  The first assertion is immediate from the construction. Now assume
  that $P$ is finite.  For $F$ in $\rep P$ there is a canonical right
  action of $\bbF_2 P$ on $\bigoplus_{x\in P} F(x)$. This yields a
  functor $\rep P\to\mod \bbF_2 P$. Now fix a (multiplicative) basis
  of $\bbF_2 P$ consisting of elements
  $\alpha_{xy}\in\Hom_{P_*}(x,y)$. Taking an $\bbF_2 P$-module $M$ to the
  representation of $P$ given by \[x\mapsto M(x):=M\alpha_{xx}\] and for
  $x\le y$ the linear map
  \[M(\alpha_{xy})\colon M(y)\lto M(x)\] given by right multiplication with $\alpha_{xy}$ yields a
  quasi-inverse. It remains to note that $\rep P$ is abelian since the
  algebra $\bbF_2 P$ is finite dimensional.
\end{proof}

We continue with further remarks, in particular to clarify the
universality of the functor $P\to \rep P$.

\begin{rem}
(1) The assignment $P\mapsto \rep P$ is functorial. Thus any poset morphism
$P\to Q$ induces a right exact functor $\rep P\to\rep Q$. The functor
$P\to \rep P$ is a composite of two universal functors, but there
seems to be no obvious universal property of the composite.

(2) We are not claiming that our universal construction lands in
finitely cocomplete additive categories. This may be confusing, as
$\rep P$ is an additive category, but a functor between additive
categories that preserves finite colimits and zero objects need not be
additive. An example arises for any field $\bbF$ of characteristic
different from $2$ by extending the obvious map $\bbF_2\to\bbF$ to
finite dimensional vector spaces.

(3) Given posets $P$ and $Q$, there is an isomorphism $P\iso Q$ if and
only if there is an equivalence $\rep P\iso\rep Q$. One direction is
clear, and to see that $\rep P$ determines $P$ one notes that
$P\to\rep P$ yields a bijection between the elements of $P$ and the
isoclasses of indecomposable projective objects in $\rep P$. Moreover,
$x\le y$ in $P$ if and only if $\Hom_{P_*}(h_x,h_y)\neq 0$.
\end{rem}

\begin{exm}
  For $n\ge 1$ consider $P_n=\{1,\ldots,n\}$ with the natural
  ordering. The assignment $i\mapsto [1,i]$ for $1\le i\le n$ yields a
  fully faithful functor $(P_n)_*\to\A_n$ which induces an equivalence
  $\rep P_n\iso \A_n$. A quasi-inverse maps an interval $[a,b]$ to the
  quotient $h_b/h_{a-1}$.
\end{exm}

\subsection*{Linear representations}

Let $\bbK$ be a commutative ring and write $\Mod\bbK$ for the category
of $\bbK$-modules. The forgetful functor $\Mod\bbK\to \Set$ admits a
left adjoint that takes a set $X$ to the free $\bbK$-module $\bbK X$
with basis $X$. For any category $\C$ let $\bbK\C$
denote its \emph{$\bbK$-linearisation} which is obtained by setting
\[\Ob\bbK\C:=\Ob\C,\]
and for any pair of objects $x,y$
\[\Hom_{\bbK\C}(x,y):=\bbK\Hom_\C(x,y).\] 
The \emph{$\bbK$-linear representations} of $\C$ are by definition
functors $\C\to\Mod\bbK$, and they identify with additive
functors $\bbK\C\to\Ab$.

For a poset $P$  we obtain categories of \emph{contravariant $\bbK$-linear representations}
by setting
\[\rep(P,\bbK):=\mod \bbK P\qquad\text{and}\qquad
  \Rep(P,\bbK):=\Mod\bbK P.\]
For $\bbK=\bbF_2$ the inclusion $\bbF_2 P\to P_*$ yields identifications
\[\rep P =\rep(P,\bbF_2)\qquad\text{and}\qquad
  \Rep P=\Rep(P,\bbF_2).\]

There is a tensor product
\[\Rep(P,\bbK)\times\Mod\bbK\lto \Rep(P,\bbK)\]
which is defined as follows. For $F\in\Rep(P,\bbK)$ and $M\in\Mod\bbK$
one defines $F\otimes_\bbK M$ by setting for $x\in P$
\[(F\otimes_\bbK M)(x):= F(x)\otimes_\bbK M.\]
For  $G\in\Rep(P,\bbK)$ there is a natural isomorphism
\begin{equation}\label{eq:tensor}
  \Hom_{\bbK P}(F\otimes_\bbK M,G)\cong\Hom_{\bbK}(M,\Hom_{\bbK
    P}(F,G)).
\end{equation}
This is clear when  $M=\bbK$ and the
general case follows by taking a free presentation of $M$.

\section{Coherent posets}

Let $P=(P,\le)$ be a poset and fix a field $\bbF$. In this section we
add some further aspects concerning the interplay between $P$ and its
category of $\bbF$-linear representations $\rep(P,\bbF)$. The
universal category $\rep P$ arises as a special case when
$\bbF=\bbF_2$. Specifically, we look at the lattice of subobjects for
objects in $\rep(P,\bbF)$.  This yields a description of the posets
such that $\rep(P,\bbF)$ is abelian. It is interesting to note that
these results do not depend on the field $\bbF$.

\subsection*{Subobjects}
An \emph{ideal} of $P$ is a subset $I\subseteq P$ that is downward
closed, so $x\le y$ in $P$ and $y\in I$ implies $x\in I$. The ideals
of $P$ are partially ordered by inclusion and form a complete
lattice. Note that the join is given by taking the union of subsets,
and the meet is given by intersections. Thus the lattice of ideals is distributive.

Given $x\in P$, we consider the
\emph{principal ideal}
\[\downarrow\! x:=\{y\in P\mid y\le x\}\]
and compare this with the representable functor
\[h_x:=\Hom_{\bbF P}(-,x)\] in $\Rep(P,\bbF)$.  For $y\in P$ we have
\[h_y\subseteq h_x\quad\iff\quad\Hom_{\bbF P}(h_y,h_x)\neq
  0\quad\iff\quad y\le x.\]

\begin{prop}\label{pr:subobj}
  The assignments
\[\downarrow\! x\supseteq I\mapsto \sum_{y\in
    I}h_y\subseteq h_x\qquad\text{and}\qquad
  h_x\supseteq F\mapsto \{y\in P\mid
F(y)\neq 0\}\subseteq {\downarrow\! x}
\]
yield mutually inverse and order preserving bijections between
\begin{enumerate}
\item the lattice of ideals of
  $\downarrow\! x$, and
\item the lattice of additive subfunctors of $h_x$.
\end{enumerate}
  In particular, both lattices are distributive. 
\end{prop}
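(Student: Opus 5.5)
The plan is to make the key observation that $h_x(y)=\Hom_{\bbF P}(y,x)$ is $\bbF$ when $y\le x$ and $0$ otherwise. So $h_x$ is a representation whose value at every point of $\downarrow\! x$ is one-dimensional. Moreover, for $z\le y\le x$ the structure map $h_x(y)\to h_x(z)$, given by precomposition with $\alpha_{zy}$, is an isomorphism $\bbF\to\bbF$, because $\alpha_{yx}\alpha_{zy}=\alpha_{zx}\neq 0$ in the incidence category.

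We first check that the map $F\mapsto\{y\mid F(y)\neq 0\}$ lands in ideals of $\downarrow\! x$. If $F\subseteq h_x$ is a subfunctor, then $F(y)$ is a subspace of a space of dimension at most one. Hence $F(y)$ is either $0$ or all of $h_x(y)$, and it can be nonzero only when $y\le x$. If $F(y)\neq 0$ and $z\le y$, then the image of $F(y)$ under the isomorphism $h_x(y)\to h_x(z)$ lies in $F(z)$, so $F(z)\neq 0$. Thus the support is downward closed.

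Conversely, we show that a subfunctor is determined by its support, and identify $\sum_{y\in I}h_y$. For $y\le x$, the subfunctor $h_y\subseteq h_x$ is the image of $h_y\to h_x$ induced by $\alpha_{yx}$. Its value at $z$ is $h_x(z)$ if $z\le y$ and $0$ otherwise, so its support is exactly $\downarrow\! y$. A sum of subfunctors is computed pointwise, so $\sum_{y\in I}h_y$ has value $h_x(z)$ exactly when $z\le y$ for some $y\in I$. When $I$ is an ideal this condition just says $z\in I$, so the support of $\sum_{y\in I}h_y$ is $I$. In the other direction, for a subfunctor $F$ with support $I$, we have $F(z)=h_x(z)$ for $z\in I$ and $F(z)=0$ otherwise. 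This is the same pointwise description as for $\sum_{y\in I}h_y$, so $F=\sum_{y\in I}h_y$. Alternatively, each $h_y$ with $y\in I$ lies in $F$ by Yoneda, since $F(y)\ni\alpha_{yx}$, which gives $\supseteq$, and pointwise comparison gives equality.

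Hence the two maps are mutually inverse. Both are evidently order preserving: larger ideals give larger sums, and larger subfunctors have larger supports. Therefore they are lattice isomorphisms. Distributivity then follows because the ideal lattice of $\downarrow\! x$ has union and intersection as join and meet, as noted above.

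The only point needing care is the claim that the structure maps of $h_x$ on $\downarrow\! x$ are nonzero. This is where the composition rule $\alpha_{yx}\alpha_{zy}=\alpha_{zx}$ in $\bbF P$ is used. Everything else is a pointwise dimension count, so no step is a serious obstacle.
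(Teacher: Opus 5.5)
Your proof is correct and is essentially the paper's argument. The paper states the two adjunctions $F\subseteq\sum_{y\in I}h_y \iff \{y\mid F(y)\neq 0\}\subseteq I$ and $\sum_{y\in I}h_y\subseteq F \iff I\subseteq\{y\mid F(y)\neq 0\}$, from which mutual inverseness follows formally; your observation that $h_x$ is one-dimensional on $\downarrow\! x$ with invertible structure maps is exactly what justifies these adjunctions, and your direct computation of both composites is the same reasoning unpacked (including the well-definedness the paper leaves implicit).
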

\begin{proof}
  The maps are well defined. Then the assertion follows from the adjunctions
  \[F\subseteq \sum_{y\in
    I}h_y\quad\iff\quad \{y\in P\mid
  F(y)\neq 0\}\subseteq I\] and
  \[\sum_{y\in
    I}h_y\subseteq F\quad\iff\quad I\subseteq \{y\in P\mid
  F(y)\neq 0\}.\qedhere\]
\end{proof}

The proposition shows that chain conditions on subobjects in $\rep(P,\bbF)$
translate into chain conditions on the lattice of ideals of $P$.

\begin{cor}\label{co:subobj}
For a poset $P$ and a field $\bbF$ the following holds.
\begin{enumerate}
\item The category $\rep(P,\bbF)$ is abelian and all objects are of finite length if
and only if  for each $x\in P$ the set $\downarrow\! x$ is finite.
\item The category $\rep(P,\bbF)$ is abelian and all objects are noetherian if
and only if for each $x\in P$ all ascending chains of ideals of $\downarrow\! x$ stabilise.
\end{enumerate}  
\end{cor}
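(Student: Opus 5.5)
We prove both parts the same way: first translate chain conditions on subobjects of the representable functors $h_x$ into chain conditions on ideals via Proposition~\ref{pr:subobj}, then extend to all objects of $\rep(P,\bbF)$. Each object $F$ in $\rep(P,\bbF)$ admits an epimorphism $\bigoplus_{i=1}^r h_{x_i}\to F$. Since finite length and the noetherian property pass to finite direct sums and to quotients in $\Rep(P,\bbF)$, which is a Grothendieck abelian category, it suffices to treat representables. By Proposition~\ref{pr:subobj}, the subobject lattice of $h_x$ in $\Rep(P,\bbF)$ is isomorphic to the lattice of ideals of $\downarrow\! x$. Consequently, $h_x$ is noetherian if and only if ascending chains of ideals of $\downarrow\! x$ stabilise, and $h_x$ has finite length if and only if this ideal lattice has finite length. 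The latter holds if and only if $\downarrow\! x$ is finite, because an infinite poset contains either an infinite chain or an infinite antichain, and either one yields infinite strictly increasing or decreasing chains of ideals.

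Next we pass from $\Rep(P,\bbF)$ to $\rep(P,\bbF)$. Suppose every $h_x$ is noetherian in $\Rep(P,\bbF)$. Then every subobject of a finitely generated object is finitely generated, and hence every finitely generated object is finitely presented. So $\rep(P,\bbF)$ coincides with the finitely generated objects, is closed under kernels and cokernels in $\Rep(P,\bbF)$, and is therefore abelian with all objects noetherian. The finite length case (1) is the special case where $\downarrow\! x$ is finite: a finite poset satisfies both chain conditions, so $\rep(P,\bbF)$ is abelian and each $h_x$ has length $\card(\downarrow\! x)$.

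The converse directions need care, and this is the main obstacle. We must deduce that chain conditions hold in $\Rep(P,\bbF)$ from the fact that $h_x$ is noetherian, or of finite length, inside the abelian category $\rep(P,\bbF)$, whose subobjects are only the finitely presented ones. The plan is as follows. If $\rep(P,\bbF)$ is abelian, then it is closed under kernels in $\Rep(P,\bbF)$, since a kernel computed in an abelian subcategory of finitely presented objects agrees with the ambient one (the standard coherence argument). Every subfunctor $\sum_{y\in I}h_y$ of $h_x$ is the directed union of its finitely generated subfunctors $\sum_{y\in I_0}h_y$ with $I_0$ finite. Each of these is the image of a map $\bigoplus_{y\in I_0}h_y\to h_x$, and images of such maps lie in $\rep(P,\bbF)$ by abelianness. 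Hence an infinite strictly ascending chain of ideals of $\downarrow\! x$ produces, after refining to finitely generated ideals, an infinite strictly ascending chain of subobjects of $h_x$ in $\rep(P,\bbF)$, which contradicts the noetherian hypothesis.

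For part (1), finite length of $h_x$ rules out any infinite chain of finitely generated ideals. If $\downarrow\! x$ were infinite, it would contain an infinite chain or an infinite antichain of elements, and in either case the finitely generated ideals $\downarrow\{y_1,\dots,y_k\}$ or the principal ideals $\downarrow\! y_k$ would give an infinite strictly monotone chain of subobjects of $h_x$ in $\rep(P,\bbF)$. This contradicts finite length, so $\downarrow\! x$ is finite.
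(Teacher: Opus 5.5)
Your proof is correct and follows the route the paper only sketches. You transport the ring-theoretic characterisations to $\rep(P,\bbF)$ via Proposition~\ref{pr:subobj}, using that the representables $h_x$ are finitely generated projective generators lying in $\rep(P,\bbF)$; this is what makes kernels in an abelian $\rep(P,\bbF)$ agree with those in $\Rep(P,\bbF)$, and makes the images of $\bigoplus_{y\in I_0}h_y\to h_x$ genuine subobjects in $\rep(P,\bbF)$.

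The one deviation is in (1). The paper relies on the fact that a distributive lattice satisfying both chain conditions is finite, whereas you use the chain/antichain dichotomy for an infinite $\downarrow\! x$, extracting a strictly monotone sequence from an infinite chain. Your version has the small advantage of producing only finitely generated ideals, so you never need to first show that every ideal of $\downarrow\! x$ gives a subobject in $\rep(P,\bbF)$.
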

\begin{proof}
  For any ring there are well known characterisations in terms of
  chain conditions on right ideals such that the category
  of finitely presented right modules is an abelian
  category consisting either of finite length or of noetherian
  objects. The proof for module categories carries over to our
  setting, by applying
  Proposition~\ref{pr:subobj}. For (1) one uses in addition that a distributive
  lattice satisfies both chain conditions if and only if it is finite.
\end{proof}

\begin{rem}
  A subrepresentation of $F\in\rep(P,\bbF)$ need not be finitely
  presented. Thus the lattices of subobjects in $\rep(P,\bbF)$ and in
  $\Rep(P,\bbF)$ do not agree in general. In fact, they agree
  precisely in the noetherian case, so when for each $x\in P$ all ascending
  chains of ideals of $\downarrow\! x$ stabilise.
\end{rem}

\subsection*{Coherent posets}
For rings there is the following hierarchy:
\[\text{right artinian}\quad\implies\quad \text{right
    noetherian}\quad\implies\quad\text{right coherent}.\] The
condition `right coherent' means that the category of finitely
presented right modules is abelian. In our context we wish to characterise
the posets such that $\rep(P,\bbF)$ is abelian. To this end we need to
recall some finiteness conditions.

An element $x$ in a lattice is \emph{compact}
if $x\le \bigvee_{\alpha\in A} x_\alpha$ implies $x\le \bigvee_{\alpha\in B} x_\alpha$ for
some finite subset $B\subseteq A$.

We continue with conditions on objects in a Grothendieck category and
refer to \cite{St1975} for details. An object $X$ is \emph{finitely
  generated} if $X=\sum_{\alpha\in A} X_\alpha$ for any family of subobjects
$X_\alpha\subseteq X$ implies $X=\sum_{\alpha\in B} X_\alpha$ for some finite
subset $B\subseteq A$. A finitely generated object $X$ is
\emph{finitely presented} if for any epimorphism $X'\to X$ from a
finitely generated object $X'$ the kernel is finitely generated. A
finitely presented object is \emph{coherent} if every finitely
generated subobject is finitely presented. For example, any noetherian
object is coherent because all subobjects are finitely generated.

\begin{lem}\label{le:mod-coherent}
 For a Grothendieck category with a generating set of objects that are
  finitely generated and projective, the following  are equivalent.
  \begin{enumerate}
\item All finitely generated projective objects are coherent.
\item The finitely presented objects form an abelian category.
 \end{enumerate}
\end{lem}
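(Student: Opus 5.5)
The plan is to follow the standard argument for coherent rings, with the generating set of finitely generated projectives playing the role of the free module of rank one. Let $\mathcal G$ be the Grothendieck category and $\mathcal P$ the chosen generating set. I will use the following facts from \cite{St1975}.
\begin{itemize}
\item Every finitely generated object is a quotient of a finite direct sum of objects in $\mathcal P$.
\item An object $X$ is finitely presented if and only if there is an exact sequence $Q_1\to Q_0\to X\to 0$ with $Q_0,Q_1$ finite direct sums of objects in $\mathcal P$.
\item Finitely presented objects are closed under cokernels and extensions.
\item Coherent objects form an abelian exact subcategory of $\mathcal G$, closed under kernels, cokernels and extensions.
\end{itemize}
For the third fact, if $Q_1\to Q_0$ presents $X$ and $X\to Y$ is a map to a finitely presented $Y$, then $Q_0\to X$ lifts along $Q'_0\to Y$ by projectivity. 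The cokernel of $X\to Y$ is then presented by $Q_0\oplus Q'_1\to Q'_0$.

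For (1)$\Rightarrow$(2), the finitely presented objects form a full additive subcategory closed under cokernels. It therefore suffices to show they are closed under kernels in $\mathcal G$. Let $f\colon X\to Y$ be a morphism of finitely presented objects. The key step is to show that every finitely presented object is coherent.
\begin{itemize}
\item By (1) each object of $\mathcal P$ is coherent.
\item Finite direct sums of coherent objects are coherent, by closure under extensions.
\item A finitely presented $X$ is the cokernel of a map $Q_1\to Q_0$ between coherent objects, hence is coherent.
\end{itemize}
Now $\Ker f$ is the kernel of a map between coherent objects, so it is coherent and in particular finitely presented. Kernels computed in $\mathcal G$ thus remain finitely presented, and the subcategory is abelian with exact inclusion.

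For (2)$\Rightarrow$(1), fix a finitely generated projective $Q$ and a finitely generated subobject $U\subseteq Q$. Choose an epimorphism $Q'\to U$ with $Q'$ a finite sum of objects in $\mathcal P$, and let $g\colon Q'\to Q$ be the composite. Both $Q'$ and $Q$ are finitely presented; for $Q$ this holds because it is a direct summand of a finite sum of generators. By (2), $g$ has a kernel $K$ inside the abelian category of finitely presented objects, and a priori this need not agree with the kernel in $\mathcal G$.

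The main obstacle is to show that $K$ is the kernel of $g$ in $\mathcal G$. I will test against $\Hom(P,-)$ for $P\in\mathcal P$. Since each such $P$ is finitely presented, the universal property of $K$ identifies $\Hom(P,K)$ with $\Hom(P,\Ker_{\mathcal G} g)$. Because $\mathcal P$ generates, the canonical map $K\to\Ker_{\mathcal G}g$ is then an isomorphism.
\begin{itemize}
\item It is monic, since any element of $\Hom(P,K)$ mapping to zero must vanish.
\item It is epic, since every map $P\to\Ker_{\mathcal G}g$ factors through $K$.
\end{itemize}
Hence $\Ker_{\mathcal G}g$ is finitely generated, and $U\cong Q'/\Ker_{\mathcal G}g$ is finitely presented. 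This shows $Q$ is coherent.
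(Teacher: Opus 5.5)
Your proposal is correct and takes essentially the paper's approach. The paper only says to adapt the proof for modules over a ring, and you carry out exactly that adaptation: (1)$\Rightarrow$(2) goes through the closure properties of coherent objects, and (2)$\Rightarrow$(1) tests the kernel computed among finitely presented objects against $\Hom(P,-)$ for $P\in\mathcal P$, which replaces evaluation at the ring, with the comparison map $K\to\Ker_{\mathcal G} g$ correctly shown to be an isomorphism.
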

\begin{proof}
  Adapt the proof for the category of modules over a ring.
\end{proof}

We propose the following definition and the subsequent result
justifies the terminology.

\begin{defn}
A poset $P$ is called \emph{coherent} if for every diagram $y\le x\ge
y'$ there are finitely many diagrams $y\ge z_\alpha\le y'$ such that for each
diagram $y\ge z\le y'$ there is an index $\alpha$ with  $z\le z_\alpha$. 
\end{defn}

\begin{prop}\label{pr:coherent}
  For a poset $P$ the following are equivalent.
\begin{enumerate}
\item The poset $P$ is coherent.
\item For each $x\in P$ the compact elements in the lattice of ideals
  of $P$ contained in $\downarrow\! x$ are closed under finite meets.
\item For each $x\in P$ the object $h_x\in\Rep(P,\bbF)$ is coherent.
\item The category $\rep(P,\bbF)$ is abelian.
\end{enumerate}
\end{prop}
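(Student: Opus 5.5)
The plan is to prove (1)$\Leftrightarrow$(2) purely order-theoretically, (2)$\Leftrightarrow$(3) via Proposition~\ref{pr:subobj}, and (3)$\Leftrightarrow$(4) via Lemma~\ref{le:mod-coherent}.

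For (1)$\Leftrightarrow$(2), first identify the compact elements in the lattice of ideals of $P$. They are exactly the finitely generated ideals ${\downarrow\! y_1}\cup\cdots\cup{\downarrow\! y_r}$. Indeed, every ideal $I$ is the union of the ${\downarrow\! y}$ for $y\in I$, and a finite union of principal ideals is clearly compact. Since meets are intersections and distribute over finite unions, the meet of two compact ideals inside ${\downarrow\! x}$ is compact as soon as every intersection ${\downarrow\! y}\cap{\downarrow\! y'}$ with $y,y'\le x$ is compact, i.e.\ finitely generated. Conversely, if (2) holds then such an intersection of two compact ideals is compact. Now ${\downarrow\! y}\cap{\downarrow\! y'}=\{z\mid y\ge z\le y'\}$, and saying it is finitely generated, by elements $z_\alpha$ of the intersection, is precisely the coherence condition for the diagram $y\le x\ge y'$.

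For (2)$\Leftrightarrow$(3), use Proposition~\ref{pr:subobj}, which identifies subobjects of $h_x$ with ideals of ${\downarrow\! x}$ via $I\mapsto\sum_{y\in I}h_y$. Under this identification, finitely generated subobjects correspond to compact ideals. This holds because a subfunctor is finitely generated exactly when it is a finite sum of images of representables, and any such image lies between the $h_y$ involved. Now $h_x$ is finitely presented, being projective and finitely generated. It is coherent if and only if every finitely generated subfunctor $F=h_{y_1}+\cdots+h_{y_r}\subseteq h_x$ is finitely presented, i.e.\ if and only if the kernel $K$ of $\bigoplus_i h_{y_i}\to F$ is finitely generated. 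This is the step needing care. The cleanest route is the standard reduction for rings: a projective is coherent iff finitely generated subobjects are closed under finite intersections and each kernel of a map $h_y\to h_x$ is finitely generated. Both conditions I expect to translate into (2).

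To carry out that reduction, I will compute kernels directly. The kernel of $h_y\to h_x$ for $y\le x$ is zero, since the map is a monomorphism. For $r=2$ there is an exact sequence
\[0\lto h_{y_1}\cap h_{y_2}\lto h_{y_1}\oplus h_{y_2}\lto h_{y_1}+h_{y_2}\lto 0,\]
and $h_{y_1}\cap h_{y_2}$ corresponds to ${\downarrow\! y_1}\cap{\downarrow\! y_2}$. Induction on $r$, using intersections of subobjects of $h_x$ and the distributivity from Proposition~\ref{pr:subobj}, shows that all these kernels are finitely generated iff the intersections of compact ideals are compact. So (2) implies (3). Conversely, if (3) holds, the sequence above forces ${\downarrow\! y_1}\cap{\downarrow\! y_2}$ to be finitely generated, which gives (2).

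Finally, (3)$\Leftrightarrow$(4) is Lemma~\ref{le:mod-coherent}, applied to $\Rep(P,\bbF)$ with generators $h_x$. The finitely generated projectives are summands of finite sums of the $h_x$, and coherence is closed under finite sums and summands. I expect the main obstacle to be the induction showing that the kernels for $r>2$ are finitely generated; the intended route is to filter $\bigoplus_i h_{y_i}$ and use that finitely presented objects are closed under extensions.
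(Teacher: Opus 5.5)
Your proposal is correct. For (1)$\Leftrightarrow$(2) and (3)$\Leftrightarrow$(4) it is the paper's argument. For (3)$\Rightarrow$(2) you use the same $r=2$ exact sequence that the paper uses for (3)$\Rightarrow$(1).

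The genuine difference is in (2)$\Rightarrow$(3). The paper computes the kernel of an arbitrary morphism $\phi\colon\bigoplus_{\alpha\in A}h_{y_\alpha}\to h_x$ explicitly:
\begin{itemize}
\item with $V_z$ the kernel of $\phi$ at $z$, the adjunction \eqref{eq:tensor} gives maps $\psi_z\colon h_z\otimes_\bbF V_z\to\bigoplus_\alpha h_{y_\alpha}$;
\item for each subset $B\subseteq A$ it picks finitely many generators $z\in B'$ of $\bigcap_{\alpha\in B}{\downarrow\! y_\alpha}$;
\item the $\psi_z$ with $z\in B'$ jointly cover $\Ker\phi$, because an element of $V_z$ supported on $B$ is the restriction of an element of $V_{z'}$ for some $z'\in B'$ with $z\le z'$.
\end{itemize}

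You instead transport the classical Chase-type criterion for coherent rings through the lattice isomorphism of Proposition~\ref{pr:subobj}. Nonzero maps $h_y\to h_x$ are monomorphisms, so their kernels are trivially finitely generated. With $F_i=h_{y_1}+\cdots+h_{y_i}$, the intersection $F_{r-1}\cap h_{y_r}$ corresponds to $({\downarrow\! y_1}\cup\cdots\cup{\downarrow\! y_{r-1}})\cap{\downarrow\! y_r}$. This is a meet of two compact ideals, so it is finitely generated by (2) directly; no separate distributivity step is needed.

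The induction you flag as the main obstacle is then routine. The quotient $F_r/F_{r-1}\cong h_{y_r}/(F_{r-1}\cap h_{y_r})$ is finitely presented, and finitely presented objects are closed under extensions. Equivalently, use the exact sequences $0\to K_{r-1}\to K_r\to F_{r-1}\cap h_{y_r}\to 0$, where $K_i$ is the kernel of $\bigoplus_{j\le i}h_{y_j}\to F_i$, together with closure of finitely generated objects under extensions.

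Your route is shorter and more conceptual and avoids the tensor adjunction. It does rely on standard facts about finitely presented objects: closure under extensions, and that finite presentation can be tested on a single presentation. The paper's route is self-contained and produces explicit generators $h_z$ of the kernel, read off from the order of $P$. It also treats maps with arbitrary scalar components all at once.
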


\begin{proof}
  (1) $\Leftrightarrow$ (2): The ideals of a poset form a distributive
  lattice, and an ideal $I$ is compact if and only if
  $I=\bigcup_\alpha\downarrow\! x_\alpha$ for some finite set of elements $x_\alpha$.
  Thus to check condition (2) it suffices to show that $\downarrow\!
  y\cap {\downarrow\! y'}$ is compact for each pair $y,y'\le x$, and
  this is precisely condition (1).

  (2) $\Rightarrow$ (3): For a morphism
  $\phi\colon \bigoplus_{\alpha\in A} h_{y_\alpha}\to h_x$ given by a finite set
  of elements $y_\alpha\in {\downarrow\! x}$ we need to show that its
  kernel is finitely generated.  For $z\in P$ set
  \[V_z:=\Ker\left( \bigoplus_{\alpha\in A} h_{y_\alpha}(z)\to
      h_x(z)\right).\] The inclusion
  \[V_z\lto \bigoplus_{\alpha\in A} h_{y_\alpha}(z)=\Hom_{\bbF P}(h_z,
    \bigoplus_{\alpha\in A} h_{y_\alpha})\] corresponds via \eqref{eq:tensor}
  to a morphism
  \[\psi_z\colon h_z\otimes_\bbF V_z\lto \bigoplus_{\alpha\in A}
  h_{y_\alpha}.\] Using the adjunction \eqref{eq:tensor} it follows that
  $\phi\psi_z=0$ and any morphism
  $\psi\colon h_z\to \bigoplus_{\alpha\in A} h_{y_\alpha}$ satisfying
  $\phi\psi=0$ factors through $\psi_z$.

  Using the coherence of $P$ there is for each subset $B\subseteq A$ a
  finite subset $B'\subseteq P$ satisfying
\[\bigcap_{\alpha\in B}\downarrow\!
  y_\alpha=\bigcup_{z\in B'}\downarrow\! z.\]
This yields a morphism
\[\bigoplus_{B\subseteq A}\bigoplus_{z\in B'}
  h_z\otimes_\bbF V_z\xto{\,(\psi_z)\,} \bigoplus_{\alpha\in A}
  h_{y_\alpha},\] where $B\subseteq A$ runs through all subsets, and its
image equals $\Ker\phi$ by construction. In fact, for any
$\psi\colon h_z\to\Ker\phi$ choose $B=\{\alpha\in A\mid\psi_\alpha\neq 0\}$. Then
$z\le z'$ for some $z'\in B'$ and $\psi$ factors through $\psi_{z'}$.

(3) $\Rightarrow$ (1): A diagram $y\le x\ge
y'$ yields a morphism $h_y\oplus h_{y'}\to h_x$, and its kernel is
given by diagrams $y\ge z\le y'$. Suppose the kernel is
a  quotient of $\bigoplus_\alpha h_{z_\alpha}$  for some finite set of elements
  $z_\alpha\in\downarrow\! x$.  This yields finitely many diagrams $y\ge z_\alpha\le y'$ such that for each
diagram $y\ge z\le y'$ there is an index $\alpha$ with  $z\le z_\alpha$. 

(3) $\Leftrightarrow$ (4): This follows from
Lemma~\ref{le:mod-coherent}, once we observe that coherent objects are
closed under finite direct sums and summands.
\end{proof}

\begin{cor}
A poset is coherent if and only if for the lattice of ideals the
meet of two compact elements is again compact.\qed
\end{cor}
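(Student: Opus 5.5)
This is essentially the equivalence (1)$\Leftrightarrow$(2) of Proposition~\ref{pr:coherent}, except that the condition on compact ideals is now global: it is no longer relativised to ideals contained in some $\downarrow\! x$. The plan is to first describe the compact ideals and then translate both directions into statements about principal ideals.

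\emph{Step 1: compact ideals.} I claim an ideal $I$ of $P$ is compact exactly when $I=\bigcup_{\alpha\in B}\downarrow\! x_\alpha$ for some finite set $B$. Indeed, any ideal satisfies $I=\bigcup_{x\in I}\downarrow\! x$, and joins of ideals are unions. So if $I$ is compact, it is already a finite such union. Conversely, each principal ideal $\downarrow\! x$ is compact: if it lies in a union of ideals, then $x$ lies in one of them, hence so does $\downarrow\! x$. Finite joins of compact elements are compact, which gives the other direction.

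\emph{Step 2: reduce to principal ideals.} Meets of ideals are intersections, and the lattice is distributive. Hence
\[\Big(\bigcup_\alpha\downarrow\! y_\alpha\Big)\cap\Big(\bigcup_\beta\downarrow\! y'_\beta\Big)=\bigcup_{\alpha,\beta}\big(\downarrow\! y_\alpha\cap\downarrow\! y'_\beta\big).\]
By Step 1, a finite union of compact ideals is compact. So the meet of two compact ideals is always compact as soon as $\downarrow\! y\cap\downarrow\! y'$ is compact for all $y,y'\in P$. Conversely, the latter condition is a special case of the former. The corollary's condition is therefore equivalent to: $\downarrow\! y\cap\downarrow\! y'$ is a finite union of principal ideals for all $y,y'\in P$.

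\emph{Step 3: compare with coherence.} Saying that $\downarrow\! y\cap\downarrow\! y'=\bigcup_\alpha\downarrow\! z_\alpha$ for finitely many $z_\alpha$ is the same as finding finitely many diagrams $y\ge z_\alpha\le y'$ that dominate every diagram $y\ge z\le y'$. If the intersection is empty, the empty family works. This matches the definition of coherence, but only for pairs $y,y'$ that admit a common upper bound $x$; here lies the main obstacle. The definition of a coherent poset only quantifies over diagrams $y\le x\ge y'$, whereas the corollary's condition concerns arbitrary pairs.

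If every pair in $P$ has an upper bound, the two conditions agree. Otherwise I would handle the discrepancy in one of two ways. The first option is to read the corollary as the relative statement of Proposition~\ref{pr:coherent}(2), namely compact ideals inside each $\downarrow\! x$; the step (1)$\Leftrightarrow$(2) there then gives the result directly. The second option, which I would try first, is to adjoin a top element. Note that $\rep P$ only involves the principal ideals $\downarrow\! x$, and that $P\sqcup\{\top\}$ is coherent if and only if the global condition holds. I would therefore check whether the authors intend coherence, via this formulation, to include all pairs.

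If no such reconciliation works, then a poset such as two incomparable elements lying above an infinite antichain of minimal elements is vacuously coherent but violates the global condition. In that case the proof should state the corollary relative to each $\downarrow\! x$, which is exactly what Steps 1 and 2 establish.
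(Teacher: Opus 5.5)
Your Steps 1 and 2 are the paper's argument: the corollary carries a \qed because it is read off from (1)$\Leftrightarrow$(2) of Proposition~\ref{pr:coherent}. The proof there identifies compact ideals with finite unions of principal ideals and uses distributivity to reduce to $\downarrow\! y\cap\downarrow\! y'$ for $y,y'\le x$.

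The discrepancy you raise in Step 3 is real, and your counterexample is valid. Take $P=\{a,b\}\cup\{m_1,m_2,\ldots\}$ with $m_i<a$ and $m_i<b$. For every diagram $y\le x\ge y'$ the intersection $\downarrow\! y\cap\downarrow\! y'$ is either empty (for $y=m_i$, $y'=m_j$, $i\neq j$, covered by the empty family) or principal, so $P$ is coherent. On the other hand, $\downarrow\! a$ and $\downarrow\! b$ are compact while $\downarrow\! a\cap\downarrow\! b=\{m_1,m_2,\ldots\}$ is not.

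So the ``if'' direction of the corollary holds as stated, since coherence is a special case of the global condition. The ``only if'' direction holds only in two settings: in the relative form of Proposition~\ref{pr:coherent}(2), or for posets in which any two elements have a common upper bound. The paper's global formulation silently drops the restriction $y,y'\le x$.

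Of your two reconciliations, commit to the first rather than hedging. Making coherence quantify over all pairs (equivalently, asking $P\sqcup\{\top\}$ to be coherent) is not the paper's definition. It would also break (1)$\Leftrightarrow$(4) of Proposition~\ref{pr:coherent}. Your example is coherent in the paper's sense, so $\rep(P,\bbF)$ is abelian, because that property only involves ideals inside each $\downarrow\! x$. Yet the example fails the global condition. The statement you can actually prove is the relative one, which is exactly what your Steps 1 and 2 establish once restricted to $y,y'\le x$.
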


\subsection*{Acknowledgements}

The first named author is grateful to Ren\'e Marczinzik and Baptiste
Rognerud for valuable comments on this work. He is also grateful to
Max Planck Institute for Mathematics in Bonn for its hospitality and
financial support. Both authors wish to thank Volodymyr Mazorchuk for
his interest in this work and for pointing out an error in a previous
calculation. We thank an anonymous referee for several helpful
comments.  This work was supported by the Deutsche
Forschungsgemeinschaft (SFB-TRR 358/1 2023 - 491392403).

\end{document}